\renewcommand*{\eqref}[1]{
	\hyperref[{#1}]{\textup{\tagform@{\ref*{#1}}}}}
\newcommand{\inj}{\mathrm{inj}}
\newcommand{\cri}{\mathrm{cr}}
\newcommand{\beq}{\begin{equation}}
\newcommand{\beqn}{\begin{equation*}}
\newcommand{\eeq}{\end{equation}}
\newcommand{\eeqn}{\end{equation*}}
\newcommand{\R}{\mathbb{R}}
\newcommand{\inter}{\mathcal{I}}
\newcommand{\q}{\mathbb{Q}}
\newcommand{\z}{\mathbb{Z}}
\theoremstyle{plain}
\newtheorem{theorem}{Theorem}
\newtheorem{proposition}{Proposition}
\newtheorem{corollary}{Corollary}
\newtheorem{lemma}{Lemma}
\theoremstyle{definition}
\newtheorem{definition}{Definition}
\theoremstyle{remark}
\newtheorem{remark}{Remark}
\newtheorem{number-env}[theorem]{}
\title[Geodesic loops, orthogonal geodesic chords]{Geodesic 
loops and orthogonal geodesic chords
without self-intersections} 
\author{Hans-Bert Rademacher}
\address{Mathematisches Institut, 
Universit{\"a}t Leipzig, D--04081 Leipzig, Germany}
\email{rademacher@math.uni-leipzig.de}
\urladdr{www.math.uni-leipzig.de/\symbol{126}rademacher}
\date{2025-08-14}
\subjclass[2020]{53C22, 58E10}
\keywords{orthogonal geodesic chords without self-intersections,
	perturbation of Riemannian metrics,
	non-degenerate orthogonal geodesic chord, 
	orthogonal-tangent geodesic chord,
generic Riemannian metrics}
\begin{document}
\begin{abstract}
	We show that for a generic Riemannian metric on 
	a compact manifold of dimension $n\ge 3$
	all geodesic loops based at a fixed point
	have no self-intersections. We also 
show that for an open and dense subset of
the space of
Riemannian metrics on an
$n$-disc with $n \ge 3$ 
and with a striclty convex boundary
there are $n$ geometrically
distinct orthogonal geodesic chords 
without self-intersections.
We use a perturbation result for intersecting geodesic
segments of the author~\cite{R2022} and
a genericity statement due to Bettiol and
Giamb\`o~\cite{BG2010}
and existence results for orthogonal
geodesic chords by
Giamb\`o, Giannoni, and 
Piccione~\cite{GGP2018}.
\end{abstract}
\maketitle
\baselineskip 20pt
\section{Results}
\label{sec:results}

It is a well known result
that on any compact smooth manifold $M$ 
with tangent bundle $TM$ 
endowed with a Riemannian metric $g$ there exists a non-trivial
closed geodesic.
If the manifold is not simply-connected a shortest non-contractible curve
is a closed geodesic. In the simply-connected
case this result is due to
Lusternik and Fet~\cite{LF}, cf.~\cite[Sec.2.4]{Kl}.
We call a closed geodesic 
$c: S^1=\R/\z \longrightarrow M$ \emph{prime} if it is not the covering
of a shorter closed curve, then the velocity field
$c': S^1 \longrightarrow TM$ defines an injective map.
We call a closed geodesic \emph{simple} resp.
\emph{without self-intersections} if the map $c:S^1\longrightarrow M$
is injective. In particular a simple closed geodesic is prime.
The shortest noncontractible smooth curve on a manifold is a
simple closed geodesic. 
On one hand any closed surfaces carries a simple closed geodesic,
cf.~\cite[Rem.~5]{R2022}.
On the other hand it is an open question whether any Riemannian
manifold $M$ of dimension $n=\dim (M)\ge 3$ carries a simple
closed geodesic, cf. \cite[Introduction]{AHS1999}.
The situation is quite different for a generic metric
in dimensions $\ge 3.$
The author has shown recently that 
that for a generic Riemannian metric, resp. a generic
reversible Finsler metric,
on a compact manifold of dimension $n\ge 3$ all prime
closed
geodesics have no self-intersections, 
and they do not intersect each other, 
cf.~\cite[Thm.1]{R2022}. 

One  ingredient of the proof is the following
perturbation result for intersecting geodesic segments:
If $N\ge 2$ geometrically distinct geodesic
segments
$c_j:[-2\eta,2\eta]\longrightarrow M, j=1,\ldots,N$ 
parametrized by arc length intersect
in the common midpoint $p=c_j(0)$ for
sufficiently small $\eta>0$
then one can perturb away the intersection of the geodesic
segments. For a precise statement 
see Lemma~\ref{lem:perturbation}.

The other ingredient is the
\emph{bumpy metric theorem} 
for closed geodesics stating that
for a generic Riemannian metric
on a compact manifold all closed geodesics are non-degenerate
in the sense of 
Morse theory.
A closed geodesic $c: S^1\longrightarrow M$
is called \emph{non-degenerate} if there is no non-trivial
periodic Jacobi field along $c$ which is orthogonal to
the velocity field $c'.$
The statement is due to Abraham~\cite{Ab}, proofs were given
by Anosov~\cite{An}, and White~\cite[Thm~2.2]{Wh}, see also
the book by Moore~\cite[\S 2.7]{Mo}.
Hence the energy functional 
$E: \Lambda M\longrightarrow \R, E(\sigma)=\frac{1}{2}
\int_0^1 g(\sigma'(t),\sigma'(t))\,dt$
on the Hilbert manifold
$\Lambda M$ of closed curves 
$\sigma:S^1=[0,1]/\{0,1\}\longrightarrow M$
(called the \emph{free loop space})
is a Morse-Bott function whose critical set is 
the disjoint union of orbits of closed
geodesics under the canonical action of the orthogonal
group $\mathbb{O}(2)$ in dimension $2$ acting on
$S^1=\R/\mathbb{Z}=[0,1]/\{0,1\}.$
It also follows that
for any $L>0$ there are only finitely many geometrically distinct
closed geodesics with length $\le L.$
It is a consequence of the perturbation result and the
bumpy metric theorem that for a generic Riemannian metric
on a compact manifold of dimension $n\ge 3$ all prime
closed
geodesics have no self-intersections, i.e. are \emph{simple,}
and they do not intersect each other, 
as shown by the author, cf.~\cite[Thm.1]{R2022}. 
Note that on a surface with a non-degenerate closed geodesic
with self-intersection it is not possible to perturb away the
self-intersection.

Instead of considering closed resp. periodic geodesics one can
study different boundary conditions for geodesic segments. In this paper we
study first
for points $p,q\in M$ geodesic segments 
$c:\inter \longrightarrow M$ 
joining $p=c(0)$ and $q=c(1).$
Here we use as parameter space the unit interval $\inter=[0,1]$ if
$p\not=q$ and the circle $\inter=S^1=\R/\z=[0,1]/\{0,1\}$ in case of
a \emph{geodesic loop,} i.e. $p=q.$ 
Note that in the case of a geodesic loop $c:S^1\longrightarrow M$
only the map $c:[0,1]\longrightarrow M$ is smooth,
since in general for a geodesic loop in contrast to a 
closed resp. periodic geodesic we have $c'(0)\not=c'(1).$
We call the geodesic
segment $c:\inter\longrightarrow M$ 
\emph{simple,} resp.
\emph{without self-intersection,} if the map 
$c:\inter \longrightarrow M$ is injective.
We call two geodesic segments 
$c_1,c_2:\inter\longrightarrow M$
\emph{
	geometrically distinct,}
if 
$c_1(\inter)\not=c_2(\inter).$
We say that two geometrically distinct geodesic
segments $c_1,c_2:\inter \longrightarrow M$
do not have an \emph{interior intersection,}
if $c_1((0,1))\cap c_2((0,1))=\emptyset.$
It follows from a general
result by Bettiol and
Giamb\`o presented in ~\cite[Theorem 5.10]{BG2010} that for a generic Riemannian
metric on a compact manifold $M$ the geodesic segments, resp.
geodesic loops, 
are non-degenerate
in the sense of Morse theory, cf. Section~\ref{sec:boundary}. 
We show in this paper that we can
use the perturbation result Lemma~\ref{lem:perturbation} and
the genericity result of \cite[Theorem 5.10]{BG2010} 
to show the following result:
\begin{theorem}
	\label{thm:generic-loop}
	Let $M$ be a compact manifold of dimension $n\ge 3.$
	Let $p,q\in M$ be points in $M,$ where we also allow $p=q.$
	For a $C^k$-generic Riemannian metric on $M$ with $k\ge 2$
	all geodesic segments joining
	$p$ and $q$ are non-degenerate and simple, and 
	geometrically distinct geodesic segments do not
	have an interior intersection.
\end{theorem}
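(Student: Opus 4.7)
The plan is to follow the scheme used by the author in \cite[Thm.1]{R2022} for closed geodesics, combining the perturbation Lemma~\ref{lem:perturbation} with the non-degeneracy genericity from \cite[Theorem 5.10]{BG2010}. The argument proceeds by a Baire-category exhaustion of the space of metrics by length cut-offs.

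First I would invoke \cite[Theorem 5.10]{BG2010}, which in both the segment case ($p\ne q$, $\inter=[0,1]$) and the loop case ($p=q$, $\inter=S^1$) produces a $C^k$-residual set $\mathcal{R}_0$ of metrics for which every geodesic segment joining $p$ and $q$ is non-degenerate in the Morse sense. Non-degeneracy together with compactness of $M$ implies that for each $m\in\n$ the set of geometrically distinct geodesic segments from $p$ to $q$ of length at most $m$ is finite. Let $\mathcal{R}_m\subset\mathcal{R}_0$ consist of those metrics for which, in addition, every segment from $p$ to $q$ of length $\le m$ is simple and any two geometrically distinct such segments have no interior intersection. Since $\bigcap_m\mathcal{R}_m$ is $C^k$-generic provided each $\mathcal{R}_m$ is $C^k$-open and dense in $\mathcal{R}_0$, the theorem reduces to verifying these two properties of $\mathcal{R}_m$.

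Openness in the $C^k$-topology follows from the non-degeneracy hypothesis: the finitely many segments of length $\le m+1$ depend continuously on $g$ via the implicit function theorem, and simplicity and absence of interior intersection are open conditions on a finite family of non-degenerate segments. For density, given $g\in\mathcal{R}_0$, I would enumerate the finitely many intersection points $x_1,\dots,x_N\in M$ which arise either as self-intersections of a single segment of length $\le m$ or as interior intersections of two geometrically distinct such segments. At each $x_j$, the relevant arcs, restricted to short subintervals of length less than $2\eta$ with $\eta<\inj/4$ and reparametrized by arclength, present $x_j$ as their common midpoint; applying Lemma~\ref{lem:perturbation} produces a $C^k$-small localized perturbation of $g$ that removes the intersection at $x_j$ while keeping the metric unchanged outside a small ball around $x_j$. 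Iterating over $j=1,\dots,N$, with each successive perturbation chosen small enough that the openness of the ``no intersection at $x_i$'' condition (for $i<j$, after the continuous deformation of the affected segments) is preserved, yields a metric $g'\in\mathcal{R}_m$ arbitrarily close to $g$.

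The main obstacle will be handling intersection points where the two arcs involved are geometrically tangent, i.e.\ a self-intersection $c(t_1)=c(t_2)=x_j$ with $c'(t_1)=\pm c'(t_2)$, or an intersection of two distinct segments whose velocities at $x_j$ are parallel. In this situation the arcs locally coincide and Lemma~\ref{lem:perturbation} does not apply directly, since it requires geometrically distinct arcs through the common midpoint. As in the closed geodesic case treated in \cite{R2022}, one must first show that such tangential configurations form a $C^k$-meagre set inside $\mathcal{R}_0$, so that they may be removed by an additional generic perturbation before the iterative scheme is launched. Once this exceptional case is ruled out, the iteration above delivers density of $\mathcal{R}_m$ and completes the proof.
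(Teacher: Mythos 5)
Your overall skeleton (non-degeneracy from \cite{BG2010}, finiteness of the segments of length $\le m$, openness via the implicit function theorem, density via iterated local perturbations with Lemma~\ref{lem:perturbation}, and a Baire intersection over length cut-offs) is the same as the paper's proof of Theorem~\ref{thm:four}(a). However, there are two genuine gaps, and the second is precisely the point you yourself call ``the main obstacle'' and then leave unproved. First, you assert that the intersection points of the finitely many segments of length $\le m$ can be enumerated as a finite set, but Lemma~\ref{lem:intersection} delivers finiteness only when no segment is part of a closed geodesic: a segment lying on a closed geodesic of period $\le 1$ has a continuum of self-intersections, and two geometrically distinct segments lying on the same closed geodesic (e.g.\ winding around it different numbers of times) meet in a continuum with parallel velocities. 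The paper therefore first establishes Lemma~\ref{lem:closed-geodesic}: an open and dense set of metrics for which no segment of length $\le L$ joining $p$ and $q$ lies on a closed geodesic, obtained from the bumpy metric theorem together with a perturbation pushing the finitely many short closed geodesics off $p$ and $q$. This step, which you omit, is also exactly what disposes of your ``tangential configurations'': by the argument proving Lemma~\ref{lem:intersection}, an interior intersection with parallel velocities forces the arcs onto a common closed geodesic (or contradicts smoothness of a geodesic), so once such segments are excluded every remaining intersection is transversal and Lemma~\ref{lem:perturbation} is applicable. Your proposal merely states that the tangential case ``must be shown to be meagre'' without an argument, so the density step is not complete as written.

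Second, your iteration applies Lemma~\ref{lem:perturbation} at every intersection point $x_j$, but this fails when $x_j\in\{p,q\}$, e.g.\ for a geodesic loop at $p$ that returns to $p$ at an interior time, or a segment from $p$ to $q$ passing again through $p$ or $q$. Lemma~\ref{lem:perturbation} separates the arcs near their common midpoint, which at $x_j=p$ would destroy the boundary condition that all competitors pass through $p$; the perturbed curves would no longer be $(g,\{p\}\times\{q\})$-geodesics. The paper treats these base-point intersections by a separate primality argument: using Lemma~\ref{lem:perturbation2} one perturbs near $p$ (and $q$) so that the finitely many prime loops and prime segments acquire endpoint velocities in general position, whence no concatenation of them is smooth and every geodesic segment of length $\le L$ joining $p$ and $q$ is prime. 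Only afterwards is the midpoint perturbation applied, and only at points of $DIP_1(L)\setminus\{p,q\}$. You need to add both of these ingredients (exclusion of closed geodesics through $p,q$, and the primality step at the base points) for the density argument to go through.
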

Note that Theorem~\ref{thm:generic-loop} also shows that for 
points $p,q$ for a generic metric there is no
closed geodesic passing through $p$ and $q,$ 
cf. Lemma~\ref{lem:closed-geodesic}.
It even shows that for a generic metric
any geodesic through $p$ and $q$ does not go through
$p$ resp. $q$ again.
In the particular case $p=q$ it follows from 
Theorem~\ref{thm:generic-loop}
that all
geodesic loops for a generic metric are
\emph{prime,}
i.e. there is no shorter geodesic loop, which
is a part of the given geodesic loop,
cf. \cite{Me}.
Hence for a generic metric for any two geodesics
$c,d:[0,1]\longrightarrow M, c(0)=d(0)=p,c(1)=d(1)=q$
we have 
either $c((0,1))=d((0,1))$ or 
$c([0,1])\cap d([0,1])=\{p,q\}.$

As a consequence of Theorem~\ref{thm:generic-loop}
we obtain the following
\begin{corollary}
	\label{cor:loops}
	Let $M$ be a simply-connected and compact manifold
	of dimension $n\ge 3$ 
	and $p,q \in M.$ 
	For a $C^k$-generic Riemannian metric on $M$ with $k\ge 2$ all
	geodesic segments joining $p$ and $q$ are 
	non-degenerate and simple,
	and geometrically distinct ones do not have interior
	intersection.
	For the number
	$N_g(p,q;t)$ of 
	geometrically distinct and simple
	geodesic segments joining $p$ and
	$q$ of length $\le t$ we obtain
	$\liminf_{t\to \infty}(N_g(p,q;t)/t)>0.$
\end{corollary}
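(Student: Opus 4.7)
The first assertion of the corollary---that all geodesic segments joining $p$ and $q$ are non-degenerate and simple, and that geometrically distinct ones have no interior intersections---follows immediately from Theorem~\ref{thm:generic-loop} applied in the simply-connected setting; no further argument is needed.

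For the asymptotic lower bound $\liminf_{t\to\infty} N_g(p,q;t)/t > 0$, I plan to invoke classical Morse theory on the path space. By Theorem~\ref{thm:generic-loop}, for the generic metric the energy functional $E$ on the Hilbert manifold $\Omega_{p,q}(M)$ of $H^1$-paths from $p$ to $q$ is Morse, and its critical points are precisely the parametrized geodesic segments joining $p$ and $q$. Hence $N_g(p,q;t)$ equals the number of critical points of $E$ with $E\le t^2/2$ (up to an at-most factor two from path-reversal in the loop case $p=q$, which does not affect linear growth). Since $M$ is simply-connected, the path space $\Omega_{p,q}(M)$ is homotopy equivalent to the based loop space $\Omega M$.

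The linear-growth estimate then rests on two classical ingredients. First, for any simply-connected compact manifold of dimension $\ge 2$, the partial sum $\sum_{k\le K}\dim H_k(\Omega M;\q)$ grows at least linearly in $K$: this follows from the Sullivan--Vigu\'e-Poirrier rational-homotopy dichotomy, which gives (at least) linear growth in the rationally elliptic case and exponential growth in the rationally hyperbolic case. Second, by Rauch comparison on the compact manifold $M$ there is a constant $C_0=C_0(g)$ such that every geodesic of length $\le L$ has Morse index at most $C_0 L$. Combined via the minimax characterization of critical values (Gromov's geometric Morse theory on path spaces), these ingredients imply that the sublevel set $\{E\le t^2/2\}$ captures the rational homology of $\Omega M$ in all degrees $\le c\,t$ for some $c>0$. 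The Morse inequality then gives
\[
N_g(p,q;t)\ \ge\ \dim H_\ast\bigl(\{E\le t^2/2\};\q\bigr)\ \ge\ c'\,t
\]
for $t$ large, proving $\liminf_{t\to\infty} N_g(p,q;t)/t>0$.

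The main technical step is the quantitative minimax estimate linking energy sublevel sets to the homology of $\Omega M$ at a rate linear in $t$; this is standard for simply-connected compact Riemannian manifolds, and the bumpy-metric conclusion of Theorem~\ref{thm:generic-loop} makes the Morse count clean, since critical points are isolated and non-degenerate so no grouping or perturbation of clusters of geodesics is required.
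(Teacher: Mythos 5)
Your proposal is correct and follows essentially the same route as the paper: the first assertions are read off from Theorem~\ref{thm:generic-loop}, and the linear growth comes from nontrivial rational homology of $\Omega_{\{p\}\times\{q\}}(M)\simeq \Omega M$ in a linearly growing set of degrees combined with Gromov's linear bound on the critical values of those classes and the Morse inequalities for the (generically Morse) energy functional. The only cosmetic differences are that the paper states the homological input as $b_{2lk}\ge 1$ for a fixed $l$ rather than via the elliptic/hyperbolic dichotomy, and that your Rauch-comparison index bound is superfluous for the lower bound (Gromov's estimate alone yields that the sublevel sets capture the homology).
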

This is a quantitative version of  
Serre's result~\cite[Ch.IV., Sec.7, Prop.14]{Se}
giving a
result concerning \emph{geometrically distinct} geodesic
segments between two fixed points.
Therefore Corollary~\ref{cor:loops}
is also satisfying from a geometric point of view,
cf. the discussion
in the doctoral thesis by Mentges~\cite[Sec.1.2]{Me} or by
Tanaka~\cite[Problem B]{Ta}.
On a surface, i.e. for $n=2,$ we cannot expect to remove intersections
of geodesic segments by a small perturbation. But for a 
point $p$ and a
generic metric on a closed surface,
all geodesic loops based at $p$ are prime, cf. Proposition~\ref{pro:surface}.

Another boundary condition for geodesic segments is
the following: Let $(M,g)$ be a compact Riemannian
manifold with smooth boundary $\partial M$ 
and interior $\textrm{Int}(M)=M-\partial M$ then a 
\emph{geodesic chord} $c:[0,1]\longrightarrow M$
is a geodesic segment with
$c(0),c(1)\in \partial M$ and $c(t)\in \textrm{Int}(M)$
for all $t\in (0,1).$ It is called 
an \emph{orthogonal geodesic chord} if
$c$ meets the boundary $\partial M$
orthogonally. 
Orthogonal geodesic chords are also called
\emph{free boundary geodesics,}
cf. for example~\cite{Do2023} or \cite{Zh}.
A chord $c:[0,1]\longrightarrow M$ is called an
\emph{orthogonal-tangent geodesic chord}
if it is tangent to the boundary at $c(0)$ and
orthogonal to the boundary at $c(1).$
As a consequence of the genericity result
by Bettiol and Giamb\`o~\cite{BG2010} we show:
\begin{theorem}
	\label{thm:generic-ogc}
	Let $M$ be a compact manifold of dimension $n\ge 3$
	with smooth boundary
	$\partial M.$ For a $C^k$-generic Riemannian metric 
	on $M$ with $k\ge 2$
	all 
	orthogonal geodesic chords are
	non-degenerate and have no self-intersections.
	In addition they all do not intersect each other. 
\end{theorem}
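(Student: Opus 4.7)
The plan is to follow the same template that the author used for closed geodesics in \cite[Thm.1]{R2022}, with the bumpy metric theorem replaced by the genericity statement of Bettiol--Giamb\`o \cite[Theorem 5.10]{BG2010}. Let $\mathcal{G}_0$ be the residual set of $C^k$-metrics on $M$ for which every orthogonal geodesic chord is non-degenerate, as provided by \cite[Theorem 5.10]{BG2010}. For such metrics, non-degeneracy together with compactness of $M$ ensures that for any $L>0$ there are only finitely many orthogonal geodesic chords of length $\le L$ (up to reversal), and the implicit function theorem applied to the boundary-value problem shows that each such chord varies continuously with the metric under small $C^k$-perturbations.

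For each positive integer $m$, I would define $\mathcal{G}_m\subset\mathcal{G}_0$ to be the set of metrics for which all orthogonal geodesic chords of length $\le m$ are simple and, moreover, geometrically distinct ones do not intersect each other. Openness of $\mathcal{G}_m$ in $\mathcal{G}_0$ follows from the continuous dependence of the finitely many chords of length $\le m$ on the metric, together with the fact that disjointness of a finite family of compact images is an open condition. The theorem then reduces to showing that each $\mathcal{G}_m$ is dense in the space of $C^k$-metrics, since $\bigcap_{m\in\n}\mathcal{G}_m$ is then residual and contained in the set claimed by the theorem.

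For density, suppose $g\in\mathcal{G}_0$ and two of its orthogonal geodesic chords of length $\le m$ (or one chord with itself, in the self-intersection case) meet at an interior point $p\in\textrm{Int}(M)$. At $p$ the intersecting branches provide $N\ge 2$ geometrically distinct geodesic segments passing through their common point, which, after reparametrization by arc length centered at $p$, satisfy the hypothesis of Lemma~\ref{lem:perturbation}. Since $p$ lies in the interior, one can choose $\eta>0$ small enough so that the metric ball of radius $4\eta$ around $p$ is disjoint from $\partial M$ and lies in a normal neighborhood. Lemma~\ref{lem:perturbation} then yields a $C^k$-small perturbation $\tilde g$ of $g$, supported in this ball, which separates the $N$ segments. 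Because the support is disjoint from $\partial M$, the orthogonality condition at the boundary is undisturbed, and every non-degenerate orthogonal geodesic chord of length $\le m$ for $g$ persists as a nearby non-degenerate chord for $\tilde g$.

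The main obstacle is the bookkeeping of the iteration: after removing one intersection one must repeat the construction over the remaining finitely many interior intersections of chords of length $\le m$, each time choosing the successive perturbation small enough that previously resolved intersections are not recreated and that no new intersections among the (finitely many) chords of length $\le m$ are introduced. This is exactly the argument carried out for closed geodesics in the proof of \cite[Thm.1]{R2022}; in the present setting it is in fact slightly simpler, since the perturbations produced by Lemma~\ref{lem:perturbation} are automatically compatible with the orthogonal boundary condition because their supports lie in $\textrm{Int}(M)$.
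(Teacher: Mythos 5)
Your proposal is correct and follows essentially the same route as the paper's proof of Theorem~\ref{thm:four}(b): genericity of non-degeneracy via Bettiol--Giamb\`o, finiteness and continuous dependence on the metric of the chords of length $\le L$, iterated application of Lemma~\ref{lem:perturbation} at the finitely many interior intersection points (which, as you note, automatically lie in $\textrm{Int}(M)$ so the perturbation does not disturb the boundary condition), openness from continuity, and a final intersection over the length bound. The only ingredients the paper adds are technical ones you leave implicit: $M$ is first enlarged to a closed manifold $M_1$ so that the genericity theorem of \cite{BG2010} applies and is then restricted back (Lemma~\ref{lem:generic-ogc} and Remark~\ref{rem:openanddense}), the admissibility of $\partial M\times\partial M$ (Lemma~\ref{lem:admissible}) supplies the uniform positive lower bound on chord length underlying your finiteness claim, and Lemma~\ref{lem:intersection} justifies that the set of self-intersection and mutual intersection points is finite.
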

See Section~\ref{sec:boundary-conditions}
for the definition of non-degeneracy for
orthogonal geodesic chords.

Lusternik and Schnirelmann~\cite{LS} have shown that
a bounded domain in $\R^n, n\ge 2$ with a smooth and convex
boundary has at least $n$ \emph{double normals,}
i.e. lines intersecting the boundary orthogonally.
This was generalized for a Riemannian metric,
Bos~\cite[p.431]{Bo} showed that 
on an $n$-disc $D^n$ with a Riemannian metric
whose boundary $\partial D^n$ is strictly
convex there are
$n$ geometrically distinct orthogonal geodesic
chords. Using a generalization of 
this result due to
Giamb\`o, Giannoni, and 
Piccione~\cite[Theorem 1.1]
{GGP2018} we obtain from
Theorem~\ref{thm:generic-ogc}:
\begin{theorem}
	\label{thm:geod-chords}
	For $n\ge 3$ let $\mathcal{G}(D^n)$ be the space of 
	Riemannian metrics on the $n$-disc
	$D^n$ with the strong $C^k$-topology with $k\ge 2.$
	Let $\mathcal{G}_1=\mathcal{G}_1(D^n)$ 
	be the set of Riemannian metrics on 
	$D^n$ for which there is no
	orthogonal-tangent geodesic chord.
	Then $\mathcal{G}_1$
	is an open and non-empty
	subset
	of $\mathcal{G}(D^n).$
	
	Furthermore
	there is an open subset $\mathcal{G}_2\subset\mathcal{G}_1$
	whose closure $\overline{\mathcal{G}}_2$
	coincides with the closure $\overline{\mathcal{G}}_1,$
	i.e. $\overline{\mathcal{G}}_1=\overline{\mathcal{G}}_2,$
	with the following property:
	For all Riemannian metrics $g \in \mathcal{G}_2$
	there are $n$ geometrically distinct geodesic 
	orthogonal chords, which are non-degenerate and
	have no self-intersection, and do not intersect each other.
\end{theorem}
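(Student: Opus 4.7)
The plan is first to verify that $\mathcal{G}_1$ is open and non-empty, next to apply the existence theorem of Giamb\`o, Giannoni and Piccione~\cite[Theorem 1.1]{GGP2018} to produce $n$ orthogonal geodesic chords, and finally to use Theorem~\ref{thm:generic-ogc} to upgrade those chords to non-degenerate, simple, pairwise disjoint ones on a dense open subset of $\mathcal{G}_1$. Non-emptiness of $\mathcal{G}_1$ is witnessed by the round Euclidean metric on $D^n$: its geodesics are straight lines, and any line tangent to the strictly convex boundary $S^{n-1}$ leaves $D^n$ instantly, so no orthogonal-tangent geodesic chord can exist. For openness I would argue by contradiction: given $g_k\to g$ in the strong $C^k$-topology with each $g_k$ admitting an orthogonal-tangent chord $c_k$, strict convexity of $\partial D^n$ should provide a uniform upper bound on the lengths of the $c_k$, so that, up to a subsequence, the starting points and initial tangent vectors of the $c_k$ converge and ODE continuous dependence yields a limiting orthogonal-tangent chord for $g$, contradicting $g\in\mathcal{G}_1$.

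For any $g\in\mathcal{G}_1$, the Giamb\`o--Giannoni--Piccione theorem then produces $n$ geometrically distinct orthogonal geodesic chords on $(D^n,g)$. I would let $\mathcal{G}_2\subset\mathcal{G}_1$ consist of those $g$ admitting $n$ geometrically distinct orthogonal geodesic chords that are simultaneously non-degenerate, simple, and pairwise non-intersecting. Openness of $\mathcal{G}_2$ in $\mathcal{G}(D^n)$ is a stability argument: given $g\in\mathcal{G}_2$ with realizing chords $c_1,\ldots,c_n$, the non-degeneracy of each $c_i$ allows an implicit function theorem to continue it to a unique nearby non-degenerate orthogonal geodesic chord for every metric close to $g$; simplicity, pairwise interior disjointness, and membership in $\mathcal{G}_1$ are open conditions and thus persist under small perturbations.

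Density of $\mathcal{G}_2$ in $\mathcal{G}_1$ follows from Theorem~\ref{thm:generic-ogc}, which supplies a $C^k$-residual (hence dense) subset $\mathcal{G}^{*}\subset\mathcal{G}(D^n)$ on which every orthogonal geodesic chord is non-degenerate, simple, and disjoint from any other. Because $\mathcal{G}_1$ is open, $\mathcal{G}^{*}\cap\mathcal{G}_1$ is dense in $\mathcal{G}_1$, and for every such metric the $n$ chords produced by Giamb\`o--Giannoni--Piccione automatically satisfy the required properties, so $\mathcal{G}^{*}\cap\mathcal{G}_1\subset\mathcal{G}_2$. Combined with $\mathcal{G}_2\subset\mathcal{G}_1$ this yields $\overline{\mathcal{G}}_1=\overline{\mathcal{G}}_2$. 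The main obstacle I anticipate is the length bound in the openness proof of $\mathcal{G}_1$, which has to draw carefully on the strict convexity of the boundary; the remaining steps reduce to standard perturbation arguments together with the cited existence and genericity results.
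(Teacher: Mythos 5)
Your overall skeleton matches the paper's: establish that $\mathcal{G}_1$ is open and non-empty, invoke Giamb\`o--Giannoni--Piccione for the existence of $n$ orthogonal geodesic chords, and use the genericity of non-degenerate, simple, pairwise disjoint chords to pass to a good open subset $\mathcal{G}_2$. Where you genuinely diverge is in the construction of $\mathcal{G}_2$. The paper does not define $\mathcal{G}_2$ by the property to be proved; it introduces the continuous comparison constant $D(g)$ with the Euclidean metric $g_0$ (so that the chords produced by the min-max construction have length $\le \pi D(g)$), takes $\mathcal{G}_0$ to be the set of metrics for which all orthogonal geodesic chords of length $\le \pi D(g)$ are non-degenerate, simple and disjoint --- this is open and dense by the finite-length version of the genericity theorem (Theorem~\ref{thm:four}(b)) --- and sets $\mathcal{G}_2=\mathcal{G}_0\cap\mathcal{G}_1$. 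Your alternative is to define $\mathcal{G}_2$ as the set of metrics admitting $n$ good chords and to prove openness by continuing each non-degenerate chord via the implicit function theorem. That works, and it spares you the length-bound bookkeeping, but it quietly shifts the burden: openness of your $\mathcal{G}_2$ requires checking that the continued critical points of $E_{g'}$ on $\Omega_{\partial D^n\times\partial D^n}$ remain genuine chords (interior image stays in the interior), remain simple and pairwise disjoint, and that $g'$ stays in $\mathcal{G}_1$; all of these do hold (the interior-point and embeddedness conditions are $C^1$-open for compact arcs meeting the boundary transversally), but they should be said. Your density step ($\mathcal{G}^{*}\cap\mathcal{G}_1\subset\mathcal{G}_2$ with $\mathcal{G}^{*}$ residual hence dense) is fine and is essentially what the paper's length-bounded version packages into the open set $\mathcal{G}_0$.

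The one genuine gap is your openness argument for $\mathcal{G}_1$. You appeal to ``strict convexity of $\partial D^n$'' to get a uniform length bound on a sequence of orthogonal-tangent chords, but strict convexity is not a hypothesis here: $\mathcal{G}_1$ sits inside the space of \emph{all} metrics on $D^n$, and Theorem~\ref{thm:geod-chords} makes no convexity assumption (indeed, for a metric with strictly convex boundary there are no orthogonal-tangent chords at all, so the interesting case is precisely the non-convex one). Without such a bound your compactness/limiting argument does not close: a sequence of orthogonal-tangent chords for $g_k\to g$ could a priori have lengths tending to infinity. The paper avoids this entirely by citing the observation in \cite[p.~117]{GGP2018} that the absence of orthogonal-tangent geodesic chords is open in the $C^1$-topology; if you want a self-contained argument you would need to extract the a priori length bound (or an equivalent compactness statement for orthogonal-tangent chords) from the structure of that reference rather than from convexity. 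Your non-emptiness witness (the flat disc) is correct and differs only cosmetically from the paper's rotationally symmetric example.
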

Here we note that geometrically distinct geodesic
chords cannot intersect at a boundary point.
The boundary $\partial D^n$ is called 
\emph{strictly convex,} if
all principal curvatures $\kappa_i, i=1,\ldots,n-1$ of the
boundary with respect to the inward pointing unit normal,
are positive, i.e. $\kappa_i > 0, i=1,\ldots,n-1.$
In particular there is no geodesic in the disc
touching the boundary
$\partial D^n$ and hence
there is no orthogonal-tangent 
geodesic chord. Therefore we obtain
from Theorem~\ref{thm:geod-chords}
the following result in the spirit of
Bos' theorem~\cite[p.431]{Bo}:
\begin{corollary}
	\label{cor:bos}
	There is an open and dense subset 
	$\mathcal{G}_*$ of the 
	set $\mathcal{G}_+(D^n)$ of Riemannian metrics on
	the
	$n$-dics $D^n$ of dimension
	$n\ge 3,$  for which the boundary $\partial D^n$ is 
	strictly convex, with the following property:
	For all Riemannian metrics
	$g\in \mathcal{G}_*,$ there are 
	at least $n$ geometrically distinct orthogonal geodesic
	chords without self-intersections. 
	
	Furthermore, these geodesic chords
	do not intersect each other.
\end{corollary}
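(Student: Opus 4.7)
The plan is to deduce the corollary from Theorem~\ref{thm:geod-chords} by observing that the strictly convex metrics form an open subset of $\mathcal{G}_1$, and then intersecting this with the dense open subset $\mathcal{G}_2$ of $\mathcal{G}_1$ provided by the theorem.

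First I would verify that $\mathcal{G}_+(D^n)$ is an open subset of $\mathcal{G}(D^n)$: strict convexity of $\partial D^n$ is a $C^2$-open condition, since the principal curvatures depend continuously on the $2$-jet of the metric at the boundary, and positivity is preserved under small perturbations. Second, as already indicated in the discussion before the corollary, I would confirm the inclusion $\mathcal{G}_+(D^n)\subset \mathcal{G}_1(D^n).$ The argument is standard: if $c:[0,1]\longrightarrow D^n$ is a geodesic with $c(0)\in \partial D^n$ and $c'(0)\in T_{c(0)}\partial D^n,$ and $\phi$ denotes the inward-pointing signed distance to $\partial D^n$ in a collar neighborhood, then $\phi(c(t))$ vanishes together with its first derivative at $t=0,$ while its second derivative at $t=0$ equals $-II(c'(0),c'(0))<0$ by strict convexity, where $II$ is the second fundamental form with respect to the inward unit normal. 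Hence $c(t)$ leaves $D^n$ for small $t>0,$ so no geodesic in the interior can be tangent to the boundary; in particular no orthogonal-tangent geodesic chord exists.

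Next I would set $\mathcal{G}_*:=\mathcal{G}_2\cap \mathcal{G}_+(D^n).$ Since $\mathcal{G}_2$ and $\mathcal{G}_+(D^n)$ are both open in $\mathcal{G}(D^n),$ the set $\mathcal{G}_*$ is open in $\mathcal{G}_+(D^n).$ For density in $\mathcal{G}_+(D^n),$ I would use the statement $\overline{\mathcal{G}_1}=\overline{\mathcal{G}_2}$ from Theorem~\ref{thm:geod-chords}, which (together with $\mathcal{G}_2\subset \mathcal{G}_1$) means that $\mathcal{G}_2$ is dense in $\mathcal{G}_1.$ Combined with $\mathcal{G}_+(D^n)\subset \mathcal{G}_1$ and the fact that $\mathcal{G}_+(D^n)$ is open in $\mathcal{G}(D^n)$ (and hence open in $\mathcal{G}_1$), a direct topological argument shows $\mathcal{G}_2\cap \mathcal{G}_+(D^n)$ is dense in $\mathcal{G}_+(D^n).$

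Finally, for any $g\in \mathcal{G}_*,$ Theorem~\ref{thm:geod-chords} applies and produces $n$ geometrically distinct orthogonal geodesic chords that are non-degenerate, have no self-intersections, and do not intersect each other. The main step with geometric content is the inclusion $\mathcal{G}_+(D^n)\subset \mathcal{G}_1(D^n),$ i.e. the exclusion of boundary-tangent geodesics in the strictly convex setting; this is already remarked in the text, so the remainder of the proof is a short topological assembly on top of Theorem~\ref{thm:geod-chords}.
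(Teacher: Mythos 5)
Your proposal is correct and follows essentially the same route as the paper, which deduces the corollary in one line from Theorem~\ref{thm:geod-chords} via the observation (already made after that theorem) that a strictly convex boundary excludes orthogonal-tangent geodesic chords, so that $\mathcal{G}_+(D^n)\subset\mathcal{G}_1$ and one takes $\mathcal{G}_*=\mathcal{G}_2\cap\mathcal{G}_+(D^n)$. Your added details — openness of $\mathcal{G}_+(D^n)$, the second-derivative argument for the tangency exclusion, and the density of $\mathcal{G}_2\cap\mathcal{G}_+(D^n)$ in $\mathcal{G}_+(D^n)$ from $\overline{\mathcal{G}}_1=\overline{\mathcal{G}}_2$ and openness — are all sound and simply make explicit what the paper leaves implicit.
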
 
Using a curve shortening construction called 
\emph{disc flow on surfaces,}
Hass and Scott~\cite[Thm.4.2]{HS} have shown that a Riemannian
$2$-disc with strongly convex boundary carries at least
two geometrically distinct orthogonal geode\-sic chords
without self-intersections.
For the proof one can also use the 
\emph{curve shortening flow,} 
as shown by  Ko~\cite{Do2023}.
With a curve shortening flow, Li~\cite{Li} presents a proof of
the above
cited result by Lusternik and Schnirelmann about the
existence of $n$ double normals of a strictly convex 
compact domain in $\R^n.$

\smallskip

{\sc Acknowledgement.} 
I am grateful to the referee for 
the helpful comments and suggestions.
\section{Intersection of geodesic segments and perturbation}
\label{sec:intersection}
For the intersection points of geodesic segments we obtain the
following result which is similar to the corresponding result for
closed geodesics, cf.~\cite[Lem.2]{R2022} resp.
\cite[Sec.2.3]{BG2010}.
Recall from the Introduction that we denote a geodesic
segment $c:\inter\longrightarrow M$ joining $p=c(0),q=c(1)$
using the parameter space $\inter=[0,1]$ if $p\not=q$
and $\inter=S^1=[0,1]/\{0,1\}$ for $p=q.$
\begin{lemma}
	\label{lem:intersection}		
	\emph{(a)} Let $\gamma:\inter\longrightarrow M$
	be a geodesic segment, which is not the parametrization
	of a closed (periodic) geodesic of
	period $\le 1.$ Then the set of \emph{interior double points,}
	resp.
	\emph{interior self-intersection points} 
	\begin{equation*}
		DP_1(\gamma):=\{\gamma(t)\,;\,
		t\in (0,1), \#\gamma^{-1}(\gamma(t))\ge 2\}
	\end{equation*}
	of the geodesic segment $\gamma$ is finite.
	
	\smallskip
	
	\emph{(b)} Let $\gamma,\delta:
	[0,1]\longrightarrow M$ be geometrically distinct
	geodesic segments with $\gamma(0)=\delta(0)=p,
	\gamma(1)=\delta(1)=q.$ 
	Then the set of 
	\emph{interior intersection points}
	\begin{equation*}
		I_1(\gamma,\delta):=\gamma((0,1))
		\cap \delta((0,1))
	\end{equation*}
	is finite.	
\end{lemma}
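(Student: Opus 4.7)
The plan is to prove both parts by a single contradiction argument combining compactness of $\inter$ with the uniqueness of geodesics from their initial data; this parallels the corresponding statements for closed geodesics in \cite[Lem.2]{R2022} and \cite[Sec.2.3]{BG2010}.

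For part (a) I would proceed as follows. Assume for contradiction that $DP_1(\gamma)$ is infinite, and choose parameters $t_n\in(0,1)$ with pairwise distinct images $\gamma(t_n)$ together with companions $s_n\in\inter\setminus\{t_n\}$ satisfying $\gamma(s_n)=\gamma(t_n)$. By compactness, pass to subsequences $t_n\to t_\ast$ and $s_n\to s_\ast$, so that $\gamma(t_\ast)=\gamma(s_\ast)$ by continuity. If $t_\ast=s_\ast$, then since $\gamma$ has constant nonzero speed it is a local embedding near $t_\ast$, forcing $t_n=s_n$ for large $n$, a contradiction. If $t_\ast\ne s_\ast$, examine the velocities $\gamma'(t_\ast)$ and $\gamma'(s_\ast)$ at the common point. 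When they are linearly independent, the two local branches of $\gamma$ cross transversally and hence meet only at $\gamma(t_\ast)$ inside a small ball around it; then eventually $\gamma(t_n)=\gamma(s_n)=\gamma(t_\ast)$, so $t_n=t_\ast$ and $s_n=s_\ast$ by local injectivity, contradicting the assumed infinitude. When the velocities are collinear, uniqueness for the geodesic ODE propagates the local identity $\gamma(t_\ast+r)=\gamma(s_\ast\pm r)$ from a neighbourhood of $r=0$ to the whole common domain; the minus sign would force $\gamma'((t_\ast+s_\ast)/2)=0$, impossible since the speed is constant and nonzero, while the plus sign makes $\gamma$ periodic of period $|t_\ast-s_\ast|\le 1$, contradicting the standing hypothesis of part (a).

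For part (b) the same scheme applies. Assuming $I_1(\gamma,\delta)$ infinite, I pick distinct intersections $\gamma(t_n)=\delta(s_n)$ with $t_n,s_n\in(0,1)$, extract limits $(t_\ast,s_\ast)\in[0,1]^2$, and split again into the transverse case (handled identically to above) and the tangential case. In the tangential case, ODE-uniqueness yields either $\delta(s)=\gamma(s+(t_\ast-s_\ast))$ or $\delta(s)=\gamma(t_\ast+s_\ast-s)$ on the common domain of definition; the endpoint conditions $\gamma(0)=\delta(0)=p$ and $\gamma(1)=\delta(1)=q$ then force $\gamma([0,1])=\delta([0,1])$, using in the periodic subcase that a geodesic of period $\le 1$ has the same image on any interval of length $1$. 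This contradicts geometric distinctness.

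The main technical point is the tangential case at the limit: one must propagate a germ-level identity of two geodesic arcs (or of a geodesic with its reverse) from a neighbourhood of the limit to the entire common domain via the identity theorem for the geodesic ODE, and only then feed in the boundary conditions, the constant-speed property, or the non-periodicity hypothesis to reach a contradiction. The transverse case and the coincidence case $t_\ast=s_\ast$ are immediate from local injectivity, so all the real content is concentrated in this tangential analysis and in the careful use of the distinguishing hypotheses of (a) and (b).
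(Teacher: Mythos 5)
Your overall strategy is the same as the paper's: assume infinitely many intersection parameters, extract convergent subsequences, and at the accumulation point split according to whether the two velocity vectors are linearly independent or collinear, using injectivity of the exponential map (equivalently, local uniqueness for the geodesic ODE) in the first case and propagation of a germ-level identity in the second. Your part (a) is complete and correct; the only cosmetic differences from the paper are that you dispose of the coincidence case $t_\ast=s_\ast$ by local injectivity of an immersion, where the paper rules it out beforehand via the quantitative bound $(t-s)L(\gamma)\ge \inj$ for any double point pair, and that your ``zero velocity at the midpoint'' phrasing of the reversal subcase is if anything cleaner than the paper's ``not smooth at the midpoint.''

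In part (b), however, the sentence ``the endpoint conditions then force $\gamma([0,1])=\delta([0,1])$'' is exactly the delicate point, and as stated it is not true. If $\gamma$ and $\delta$ are tangent at the limit point they lie on a common complete geodesic $c$ (note also that the shift should carry a scaling factor, since $\gamma$ and $\delta$ need not have the same speed), say $\gamma\leftrightarrow c|[a_1,b_1]$ and $\delta\leftrightarrow c|[a_2,b_2]$ with overlapping parameter intervals. The endpoint conditions only say that $c(a_i),c(b_i)\in\{p,q\}$; if $c$ revisits $p$ or $q$, the intervals can overlap properly, giving two geometrically distinct segments from $p$ to $q$ whose interiors share a whole arc. (Concretely: a complete geodesic with $c(0)=p$, $c(1)=p$, $c(2)=q$, $c(3)=q$ yields $\gamma=c|[0,2]$ and $\delta=c|[1,3]$ overlapping on $c([1,2])$; a closed geodesic through $p$ and $q$ traversed with winding gives a similar effect.) To close the argument you need an additional input, e.g.\ that $\gamma$ and $\delta$ do not pass through $p$ or $q$ at interior times and are not arcs of a closed geodesic --- then the overlap of the parameter intervals forces $[a_1,b_1]=[a_2,b_2]$ and hence equal images. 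To be fair, the paper's own proof of (b) is no better at this point: it simply asserts that geometric distinctness implies $v/\|v\|\ne\pm w/\|w\|$ at an interior intersection, which fails for the same configurations; the hypotheses that repair it are only imposed later, where the lemma is actually applied. So your proof matches the paper's including its one soft spot, but you should either add the missing hypothesis or spell out the interval argument above rather than appeal to the endpoint conditions alone.
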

\begin{remark}
	(a) Note that in case (a) for a geodesic segment
	$\gamma:\inter \longrightarrow M$ the initial point
	$c(0)$ is an interior self-intersection point, if
	there is a parameter $s\in (0,1)$ with $c(s)=c(0).$
	
	\smallskip
	
	(b)
	The geodesic segment $\gamma:\inter\longrightarrow M$
	has \emph{no interior self-intersection,} resp. is called \emph{simple,}
	if the map $\gamma$ is injective, i.e.
	if the set $DP_1(\gamma)$ of interior double points is empty. 
	The geodesic segments $\gamma,\delta$ joining $p,q$
	\emph{do not have interior intersection,}
	if $
	I_1(\gamma,\delta)=\emptyset.$
\end{remark}
\begin{proof}
	Let $\inj>0$ be the injectivity radius of $(M,g).$ We conclude from
	$\gamma(t)=\gamma(s)$ and $t<s$ that $ (s-t)L(\gamma)\ge \inj.$ This implies that
	for all $t\in [0,1]:
	\# \gamma^{-1}(\gamma(t))\le L(\gamma)/\inj+1.$
	
	\smallskip
	
	(a) If the set $DP_1(\gamma)$ is not finite, then there exist sequences
	$s_j<t_j\in [0,1]$ converging to $s^*,t^*\in [0,1]$ with
	$s_j\not=s^*,t_j\not=t^*, (t_j-s_j)L(\gamma)\ge \inj$ for all
	$j,$ 
	$(t^*-s^*)L(\gamma)\ge \inj,$
	and 
	$\gamma(s_j)=\gamma(t_j)$ for all $j\ge 1.$ 
	For $p_1=\gamma(t^*)=\gamma(s^*)$ set $v=c'(t^*), w=c'(s^*).$
	The geodesic segment $\gamma$ is parametrized proportional
	to arc length, hence $\|v\|=\|w\|.$
	Since $c(t_j)=\exp_p((t_j-t^*)v)=c(s_j)=\exp_p((s_j-s^*)w)$ 
	we obtain for sufficiently large $j$ with 
	$|t_j-t^*|\|v\|,|s_j-s^*|\|w\|<\inj$ that 
	$(t_j-t^*)v=(s_j-s^*)w$ holds, i.e. $v=\pm w.$ 
	If $v=w$ then $\gamma(t)=\gamma(t+(t^*-s^*))$ for all $t,$ i.e.
	$\gamma $ is a closed geodesic, which is not prime.
	If $v=-w$ we conclude
	$\gamma((t^*-s^*)/2-t)=\gamma((t^*-s^*)/2+t)$ for all $t, $
	showing that $\gamma$ is not smooth at $(t^*-s^*)/2.$

	\smallskip
	(b) The argument is similar to (a): 
	If the set $I_1(\gamma,\delta)$ is infinite then there are sequences
	$s_j,t_j\in [0,1]$ converging to $s^*,t^*$
	with $s_j\not=s^*, t_j\not=t^*$ and 
	$\gamma(s_j)=\delta(t_j)$ for all $j\ge 1.$ 
	Let $p=\gamma(s^*)=\delta(t^*)$ and $v=\gamma'(s^*),
	w=\delta'(t^*).$
	Since $\gamma$ and $\delta$ are geometrically distinct, we have
	$v/\|v\|\not=\pm w/\|w\|.$
	Since $\gamma(t_j)=\exp_p((t_j-t^*)v)=
	\delta(s_j)=\exp_p((s_j-s^*)w)$ we conclude for sufficiently large
	$j$ with
	$|t_j-t^*|\|v\|,|s_j-s^*|\|w\|<\inj$ that 
	$(t_j-t^*)v=(s_j-s^*)w$ holds, i.e. $v/\|v\|=\pm w/\|w\|,$ which is 
	a contradiction.
\end{proof}
We obtain from the perturbation result~\cite[Lem.3]{R2022}:
\begin{lemma}
	\label{lem:perturbation}
	Let $(M,g)$ be a compact Riemannian manifold 
	of dimension $n \ge 3$ and 
	$\eta>0$ be a positive number, such that the injectivity radius
	$\inj$ satisfies $\inj \ge 4\eta.$
	For a point $p \in M$ let 
	$c_j:[-2\eta,2\eta]\longrightarrow M, j
	=1,\ldots,k$
	be $k\ge 2$ geometrically
	distinct geodesic segments
	parametrized by arc length with 
	common midpoint $p=c_j(0),
	j=1,\ldots, k.$
	
	In any neighborhood $\mathcal{U} \ni g$
	with respect to the strong $C^k$-topology with $k\ge 2,$
	there is a Riemannian metric $\overline{g}\in \mathcal{U}$ 
	coinciding with $g$
	outside the geodesic ball $B_p(\eta)$ of radius $\eta$
	centered at $p,$ 
	carrying geodesic
	segments
	$\overline{c}_j:[-2\eta,2\eta]\longrightarrow M,
	j=1,\ldots,k$ parametrized
	by arc length, which coincide with $c_j$ outside $B_p(\eta).$
	
	These geodesic segments are simple and do not intersect each
	other.
\end{lemma}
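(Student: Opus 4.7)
The plan is to reduce Lemma~\ref{lem:perturbation} directly to the author's earlier local perturbation result \cite[Lem.3]{R2022}, which is designed precisely for this configuration; the bulk of the work lies in verifying that its hypotheses are met and that its output implies the two claimed geometric conclusions.

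First I would record the clean local picture guaranteed by the injectivity radius hypothesis. Since each $c_j$ has length $4\eta\le \inj$, the entire image $c_j([-2\eta,2\eta])$ is contained in $\overline{B_p(2\eta)}$. The short-geodesic uniqueness argument used in the proof of Lemma~\ref{lem:intersection} then shows that each $c_j$ is automatically simple and that, for $i\ne j$, the segments $c_i$ and $c_j$ meet \emph{only} at the common midpoint $p$. In particular the tangent vectors $c_j'(0)\in T_pM$ span pairwise distinct lines, so the configuration at $p$ is a $k$-fold transverse crossing that we wish to split apart by a metric perturbation supported in $B_p(\eta)$.

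I would then invoke \cite[Lem.3]{R2022}: exploiting $n\ge 3$, hence codimension at least two for each geodesic segment, that lemma produces a Riemannian metric $\overline g$, arbitrarily close to $g$ in the strong $C^r$-topology and equal to $g$ off $B_p(\eta)$, whose geodesics $\overline c_j$ determined by the same boundary data as $c_j$ on $\partial B_p(\eta)$ extend to $[-2\eta,2\eta]$, coincide with $c_j$ outside $B_p(\eta)$, and no longer share a common point. Simplicity of each $\overline c_j$ then follows because for $\overline g$ sufficiently close to $g$ its injectivity radius still exceeds $4\eta$, forcing the length-$4\eta$ geodesic $\overline c_j$ to be simple by the same short-geodesic argument. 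Pairwise disjointness holds inside $B_p(\eta)$ by the direct output of \cite[Lem.3]{R2022}, and outside $B_p(\eta)$ because there $\overline c_j=c_j$ and the original $c_j$ were disjoint off their unique meeting point $p$, which lies in $B_p(\eta)$.

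The main obstacle I anticipate is bookkeeping around the cited lemma: if \cite[Lem.3]{R2022} is formulated only for a pair of segments rather than $k$ simultaneously, one must iterate, separating $c_1$ from $c_2$, then the perturbed pair from $c_3$, and so on, choosing each subsequent perturbation small enough not to undo a separation previously achieved. The stability of transverse non-intersection of codimension-$\ge 2$ submanifolds under $C^r$-small perturbations, available thanks to $n\ge 3$, makes this finite iteration terminate safely and produces the single metric $\overline g$ satisfying all the conclusions.
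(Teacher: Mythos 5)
Your proposal is correct and follows essentially the same route as the paper, which in fact offers no proof of this lemma beyond the citation of \cite[Lem.3]{R2022} (stated there for $N\ge 2$ segments simultaneously, so your fallback iteration is not needed). Your additional verifications — that the injectivity-radius bound forces each $c_j$ to be simple and the $c_j$ to meet pairwise only at $p$, and that the conclusions persist for the perturbed metric — are exactly the details the paper leaves implicit.
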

A similar proof as in
Lemma~\cite[Lem.3]{R2022} shows the following
\begin{lemma}
	\label{lem:perturbation2}
	Let $c:[0,1]\longrightarrow M$
	be a geodesic loop with $p=c(0)=c(1)$
	of the Riemannian metric $g$ on the differentiable
	manifold $M$
	with $v=c'(0)\in T_pM, w=c'(1)\in T_pM.$
	Then there exists a continuous
	one parameter perturbation 
	$g_s, s\in [0,s_1]\in \mathcal{G}(M)$ 
	in the space of Riemannian metrics with
	the strong $C^k$-topology with $k\ge 2$ 
	of the metric $g=g_0,$ 
	and smooth functions
	$s\in [0,s_1]\mapsto v(s), w(s) \in T_pM,$
	such that $c_s:[0,1]\longrightarrow M$ is a geodesic
	loop with $p=c(0)$ of
	the Riemannian metric $g_s$ 
	with $v(s)=c_s'(0), w(s)=c_s'(1),$ and $v(s),v$ (resp.
	$w(s),w$) are linearly independent for $s>0.$ 
	The metric $g_s$ agrees with
	$g$ outside an arbitrarily small geodesic
	ball around $p.$
\end{lemma}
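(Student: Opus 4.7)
The plan is to realize $g_s$ as the pushforward of $g$ by a family of diffeomorphisms that are the identity outside an arbitrarily small geodesic ball around $p$ and act on $T_pM$ by a prescribed rotation. This way the perturbed geodesic loop $c_s$ can be taken to be the image of $c$ under that diffeomorphism, and its initial and final velocities at $p$ simply get rotated, so one never has to solve a perturbed geodesic equation.

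First I choose a skew-symmetric endomorphism $A\in\mathfrak{so}(T_pM)$ with $Av\ne 0$ and $Aw\ne 0$; such $A$ exists in abundance since for any nonzero $u$ the locus $\{A:Au=0\}$ is a proper linear subspace of $\mathfrak{so}(T_pM)$, and I set $R_s:=\exp(sA)\in SO(T_pM)$. Then I pick a smooth cut-off $\chi:[0,\eta]\to[0,1]$ equal to $1$ near $0$ and to $0$ near $\eta$ (the flatness of $\chi$ at $0$ ensures that $x\mapsto\chi(|x|)$ is smooth through the origin), and in normal coordinates at $p$ I define
\[
\phi_s(x)\;:=\;\exp\!\bigl(s\,\chi(|x|)\,A\bigr)\,x,
\]
extended by the identity outside $B_p(\eta)$. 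For $s$ in a small interval $[0,s_1]$ this is a smooth family of diffeomorphisms of $M$ fixing $p$, agreeing with the identity outside $B_p(\eta)$, and satisfying $d\phi_s|_p=R_s$. I then set $g_s:=(\phi_s)_*g$ and $c_s:=\phi_s\circ c$; by construction $\phi_s$ is an isometry from $(M,g)$ onto $(M,g_s)$, so $c_s$ is a $g_s$-geodesic loop based at $p$ with initial and final velocities $v(s)=R_sv$ and $w(s)=R_sw$, both depending continuously on $s$.

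The only substantive point to verify is the linear independence. As $R_s$ is orthogonal, $R_sv\in\mathbb{R}v$ forces $R_sv=\pm v$; the condition $Av\ne 0$ excludes $R_sv=v$ for $s>0$ small, while $R_sv=-v$ requires $s$ to be bounded away from $0$, so after possibly shrinking $s_1$ we obtain $v(s),v$ linearly independent for all $s\in(0,s_1]$, and the same argument applies to $w(s),w$. The main obstacle is really only bookkeeping: one has to confirm that $\chi(|\cdot|)$ is smooth at the origin (handled by the flatness of $\chi$ near $0$), that $\phi_s$ remains a diffeomorphism for small $s$ (by the inverse function theorem), and that $s\mapsto g_s$ is continuous in the strong $C^k$-topology, which follows at once from the smooth dependence of $\phi_s$ on $s$. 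The parallel to the perturbation argument of \cite{R2022} alluded to in the statement lies precisely in this strategy of supporting the perturbation in a small geodesic ball around $p$ and reading off the behaviour of the geodesic directly from the geometry of the diffeomorphism.
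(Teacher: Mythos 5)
Your construction is internally sound and does deliver the lemma exactly as it is written: $\phi_s$ is a smooth compactly supported diffeomorphism fixing $p$ with $d\phi_s|_p=R_s$, the pushforward $g_s=(\phi_s)_*g$ agrees with $g$ outside $B_p(\eta)$ and depends continuously on $s$, the curve $c_s=\phi_s\circ c$ is a $g_s$-geodesic loop at $p$ with endpoint velocities $R_sv$, $R_sw$, and your orthogonality argument correctly excludes $R_sv=\pm v$ for small $s>0$. But this is a genuinely different route from the one the paper intends, and the difference matters. Because $\phi_s:(M,g)\to(M,g_s)$ is an isometry, $g_s$ has exactly the same geodesic geometry as $g$: every geodesic loop of $g$ at $p$ is carried to one of $g_s$ at $p$, and all of their velocities at $p$ are rotated by the \emph{same} linear map $R_s=d\phi_s|_p$. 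Hence every coincidence $c_i'(1)=c_j'(0)$ between the final velocity of one loop and the initial velocity of another (including $i=j$) is preserved, and a non-prime geodesic loop of $g$ is carried to a non-prime geodesic loop of $g_s$. This is precisely the configuration that Lemma~\ref{lem:perturbation2} is invoked to destroy in the proof of Theorem~\ref{thm:four}(a), so your proof satisfies the letter of the statement while producing a perturbation that is useless for its only application; in effect it exposes that the statement, read literally, asks for less than the paper needs.

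The proof the paper has in mind, modelled on \cite[Lem.3]{R2022} (i.e.\ on Lemma~\ref{lem:perturbation}), is a genuine, non-isometric deformation of the metric coefficients inside $B_p(\eta)$, chosen so that the outgoing leg of the loop (parameters near $0$) and the incoming leg (parameters near $1$) are deflected independently of one another and of the other geodesics through $p$, while $c_s$ still coincides with $c$ outside the ball. That independence is the actual content required later: one must be able to move $v(s)$ and $w(s)$ so as to separate the set of initial directions from the set of final directions of the finitely many prime loops. No pushforward construction can achieve this, since a diffeomorphism fixing $p$ has a single differential at $p$ and therefore cannot break any incidence relation among the velocities there. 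To repair your argument you would have to leave the isometric framework and perturb the metric itself in normal coordinates near $p$, as in \cite[Lem.3]{R2022}.
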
	
\section{Geodesics with product boundary conditions}
\label{sec:boundary}
\label{sec:boundary-conditions}
The following concept is introduced in \cite[Sec.4]{BG2010},
cf. also~\cite{Gr}.
Let $M$ be a differentiable manifold.
Let $\mathcal{P}\subset M \times M$ be a 
compact submanifold. This is called a 
\emph{general boundary condition} on $M$ if 
for some Riemannian metric $g$ on $M$ the 
pseudo-Riemannian metric
$\overline{g}=
g \oplus (-g)$ on $M \times M$ induces a non-degenerate
pseudo-Riemannian metric on $\mathcal{P},$
cf. ~\cite[Def.4.2]{BG2010}.
It is shown in \cite[Prop.4.1]{BG2010}
that the subset $\mathcal{A}_{\mathcal{P}}
\subset \mathcal{G}(M)$ of the set
of Riemannian metrics on $M$ for which the 
restriction of
the pseudo Riemannian metric $\overline{g}$ onto the submanifold
$\mathcal{P}$ is non-degenerate, is an open subset.
The \emph{second fundamental form} of $\mathcal{P}$
in the normal direction $\eta \in T\mathcal{P}^{\perp}$ 
is given by
\begin{equation*}
	S^{\mathcal{P}}_{\eta}(v,w)=\overline{g}
	\left(\nabla_v^{\overline{g}} \overline{w},\eta
	\right),
\end{equation*}
cf.~\cite[p.346]{BG2010}. Here $\overline{\nabla}$ is the
Levi-Civita connection on $M\times M$ with metric
$\overline{g}.$ The vector field $\overline{w}$ is a smooth
extension of $w$ to a smooth vector field along $\mathcal{P}.$
Since the restriction of $\overline{g}$ to $\mathcal{P}$ is
non-degenerate we can identify the symmetric bilinear form
with the self-adjoint \emph{shape operator} 
$(S_{\eta}^{\mathcal{P}})_p:
T_p\mathcal{P}\longrightarrow T_p\mathcal{P}$
with $\overline{g}((S_{\eta}^{\mathcal{P}})_pv,w)=
S_{\eta}^{\mathcal{P}}(v,w)$ for all $v,w \in T_p\mathcal{P}.$
Let $H^1(I,M)$ be the set of all curves of Sobolev
class $H^1$ in $M,$ i.e. of absolutely continuous curves
with finite energy. Then the subspace
\begin{equation*}
	\Omega_{\mathcal{P}}(M)=
	\left\{
	\gamma \in H^1(I;M)\,;\, (\gamma(0),\gamma(1))\in \mathcal{P}
	\right\}
\end{equation*}
is a Hilbert manifold. The tangent space
$T_{\gamma}\Omega_{\mathcal{P}}(M)$
can be identified with the completion of the 
space of smooth vector fields
$v$ along $\gamma$ with $(\gamma(0),\gamma(1))\in
T_{(\gamma(0),\gamma(1))}\mathcal{P}$
with respect to the induced metric $g_1=\langle .,.
\rangle_1.$ It is defined on
$T_{\gamma}\Omega_{\mathcal{P}}(M)$
by
$$
\langle v,w\rangle_1=
\int_0^1 g\left( v(t),w(t)\right) \,dt +
\int_0^1 g\left(\nabla v(t),\nabla w(t)
\right)\, dt\,.
$$
A geodesic 
$\gamma:I\longrightarrow M$ 
is called a $(g,\mathcal{P})-$\emph{geodesic}
if $(\gamma'(0),\gamma'(1))\in T_{(\gamma(0),\gamma(1))}
\mathcal{P}^{\perp}.$ Here the orthogonality is with
respect to the metric $\overline{g}.$
It is shown in \cite[Sec. 5]{BG2010}
that a curve $\gamma$ is a critical point of
the energy functional $E_{g}:\Omega_{\mathcal{P}}(M)
\longrightarrow \R,$
\begin{equation}
	\label{eq:energy}
	E_{g}(\gamma)=\frac{1}{2}\int_0^1 g(\gamma'(t),
	\gamma'(t))\,dt
\end{equation}
if and only if it is a $(g,\mathcal{P})$-geodesic.
The kernel of the index form
is the space of Jacobi fields
$Y \in T_{\gamma}\Omega_{\mathcal{P}}(M)$ such that
\begin{equation}
	\left(
	\nabla  Y(0),\nabla Y(1)
	\right)	+
	S_{(\gamma'(0),\gamma'(1))}(Y(0),Y(1))
	\in T_{(\gamma(0),\gamma(1))}\mathcal{P}^{\perp}.	
\end{equation}
These Jacobi fields are also called $\mathcal{P}$-Jacobi fields.
This setting is also considered by Grove
in the context of
\emph{isometry invariant geodesics,} it is shown in
~\cite[Thm.2.4]{Gr} that the 
energy functional~\eqref{eq:energy}
satisfies the Palais-Smale condition,
resp. condition (C), provided that 
the Riemannian manifold $(M,g)$ is complete and
the space $\mathcal{P}$ is compact.

A general boundary condition is called
\emph{admissible} if for every $g_0 \in \mathcal{A}_{\mathcal{P}}$
there exists an open neighborhood $\mathcal{V}$
of $g_0$ in $\mathcal{A}_{\mathcal{P}}$
and a positive real number $a>0$ such that
for all $g \in \mathcal{V}$ and all non-trivial
$(g,\mathcal{P})$-geodesics $\gamma$
we have: $L_g(\gamma)\ge a,$
cf. \cite[Def.4.5]{BG2010}.
Here a non-trivial $(g,\mathcal{P})$ geodesic is 
a non-constant curve.

We restrict ourselves to the following case:
\begin{definition}
	\label{def:product}
	Let $(M,g)$ be a complete Riemannian manifold
	of dimension $n$ 
	and $N_0,N_1\subset M$ connected 
	and compact submanifolds (without boundary)
	of dimension $\dim N_i=n_i\in \{0,1,\ldots,n-1\}, i=0,1.$
	Then the \emph{product boundary condition}
	is given as general boundary condition with
	$\mathcal{P}=N_0\times N_1.$
\end{definition}
In particular a $(g,N_0\times N_1)$-geodesic
is a geodesic $c:[0,1]\longrightarrow M$
with $c(l)\in N_l,
c'(l)\in T_{c(l)}^{\perp}N_l,l=0,1.$
The product metric
$g \oplus (-g)$ on $N_0\times N_1$ is non-degenerate,
hence a product boundary condition is always
non-degenerate, as defined in \cite[Sec.4]{BG2010}.
In case $\mathcal{P}=N\times N$
the $(g,N\times N)$-Jacobi fields are Jacobi fields
$Y \in T_{\gamma}\Omega_{N\times N}$ with
\begin{equation}
	\label{eq:Jacobi2}
	\nabla Y(l)+S_{\gamma(l)}(Y(l)) 
	\in T_{\gamma(l)}N^{\perp}\,, l=0,1\,,
\end{equation}
they form the kernel of the index form.
The $(g,N\times N)$-geodesic $\gamma$ is 
non-degenerate if the kernel is trivial,
i.e. if there is no non-trivial
$(g,N\times N)$-Jacobi field.
Here $S_x$ denotes the shape operator of the
submanifold $N\subset M$ with respect to a unit normal
field $\nu,$ i.e.
$$
S_x (v)=\nabla_{v}\nu\,.
$$
And $\nabla$ is the Levi-Civita connection of $(M,g).$
If $N_0=\{p\},N_1=\{q\},$ then a 
$(g,\{p\}\times \{q\})$-Jacobi field along a
$(g,\{p\}\times \{q\})$-geodesic segment $\gamma:[0,1]\longrightarrow M$
is a Jacobi field $Y$ vanishing at the end points,
i.e. $Y(0)=0, Y(1)=0.$ Hence $q$ is not a conjugate point
for $p$ along $\gamma$
if and only if
there is no $(g,\{p\}\times\{q\})$-Jacobi field.
\begin{lemma}
	\label{lem:admissible}	
	The following sets $\mathcal{P}$ are admissible product
	boundary conditions:
	
	\smallskip
	
	(a) Let $(M,g)$ be a compact Riemannian manifold
	(without boundary)
	of dimension $n\ge 2.$
	If $p,q\in M$ are two points, where we also allow
	$p=q,$ then $\mathcal{P}=\{p\}\times\{q\}$ is an
	admissible product boundary condition.
	
	\smallskip
	
	(b) If $(M,g)$ is a compact Riemannian manifold 
	of dimension
	$n\ge 2$ with smooth boundary $\partial M,$
	then $\mathcal{P}=\partial M \times \partial M$ is 
	an admissible product boundary condition.
	
	\smallskip
	
	(c) If $(M,g)$ is a compact Riemannian manifold
	(without boundary) of dimension $n\ge 2,$
	and 
	if $N \subset M$ is a compact submanifold
	of codimension one,
	then $\mathcal{P}=N \times N$ is an admissible
	product boundary condition.
\end{lemma}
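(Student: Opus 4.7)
The strategy in all three cases is the same: for each $g_0\in\mathcal{G}(M)$ I will exhibit a lower bound $L_g(\gamma)\ge a(g)$ for every non-trivial $(g,\mathcal{P})$-geodesic, in which $a(g)>0$ depends continuously on $g$ in the $C^k$-topology with $k\ge 2$; restricting to a neighborhood on which $a(g)\ge a(g_0)/2$ then yields admissibility. Recall that for any product boundary condition one has $\mathcal{A}_{\mathcal{P}}=\mathcal{G}(M)$ by Definition~\ref{def:product}, so no openness issue arises. For part (a), if $p\ne q$ every $(g,\{p\}\times\{q\})$-geodesic has length at least $d_g(p,q)>0$. If $p=q$, a non-constant geodesic loop $c$ at $p$ with $|c'(0)|_g<\inj_g(p)$ would force $\exp_p(c'(0))=p=\exp_p(0)$ and hence $c'(0)=0$ by injectivity of $\exp_p$ on the ball of radius $\inj_g(p)$, so $L_g(c)\ge\inj_g(p)$. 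Both $d_g(p,q)$ and $\inj_g(p)$ depend continuously on $g$ in the $C^2$-topology on a compact manifold, finishing (a).

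For parts (b) and (c) let $\Sigma$ denote the codimension-one submanifold under consideration, i.e.\ $\Sigma=\partial M$ in (b) and $\Sigma=N$ in (c). I work in a collar/tubular neighborhood $U_g$ of $\Sigma$ of width $\epsilon_g>0$, realised via the Fermi parametrization $(y,t)\mapsto\exp^g_y(t\nu^g(y))$ with $\nu^g$ a unit normal (globally defined as the inward normal in (b); locally in (c), which is enough since a short geodesic is confined to a small ball where $N$ is automatically two-sided). The signed distance $f(y,t)=t$ is smooth on $U_g$ with $|\nabla f|_g=1$ and $f|_\Sigma=0$, and $C_g:=\sup_{\overline{U_g}}\|\mathrm{Hess}^g f\|_{\mathrm{op}}<\infty$ by compactness of $\Sigma$. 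Given a non-trivial $(g,\Sigma\times\Sigma)$-geodesic $\gamma:[0,1]\to M$ of length $L$, either $\gamma$ leaves $U_g$, whence $L\ge 2\epsilon_g$ (as $\gamma(1)\in\Sigma\subset U_g$), or $\gamma([0,1])\subset U_g$. In the latter case choose the sign of $f$ so that $\varphi:=f\circ\gamma$ satisfies $\varphi'(0)=g(\nabla f,\gamma'(0))=+L$ (using $\gamma'(0)\perp T_{\gamma(0)}\Sigma$ and $|\gamma'(0)|_g=L$); since $\varphi''(t)=\mathrm{Hess}^g f(\gamma'(t),\gamma'(t))\ge -C_gL^2$, integration gives
\begin{equation*}
\varphi(t)\ \ge\ Lt-\tfrac{1}{2}C_g L^2 t^2,\qquad t\in[0,1].
\end{equation*}
Evaluating at the first $t^*\in(0,1]$ with $\gamma(t^*)\in\Sigma$ (which exists since $\gamma(1)\in\Sigma$), one has $\varphi(t^*)=0$, forcing $L\ge 2/(C_g t^*)\ge 2/C_g$. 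Thus $L\ge\min\{2\epsilon_g,\,2/C_g\}=:a(g)$.

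The one step that requires real care, and is the main technical obstacle, is the continuity of $\epsilon_g$ and $C_g$ in $g$ with respect to the $C^2$-topology. This reduces to smooth dependence on $g$ of the normal exponential map $\Sigma\times\R\to M$ over the compact set $\Sigma$, which is standard; once a Fermi chart is fixed on a small $C^2$-neighborhood of $g_0$, the width $\epsilon_g$ can be chosen uniform and $\|\mathrm{Hess}^g f\|_{\mathrm{op}}$ is a continuous function of two derivatives of $g$, yielding continuity of $C_g$. Consequently $a(g)$ is positive at $g_0$ and continuous in $g$, completing the proof of admissibility in all three cases.
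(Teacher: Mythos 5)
Your proof is correct, and part (a) is essentially the paper's own argument (lower bound $d_g(p,q)$ for $p\neq q$, injectivity radius for $p=q$, plus continuity of both in the metric). For parts (b) and (c), however, you take a genuinely different route. The paper argues directly with the normal injectivity radius: it invokes a continuous function $g\mapsto \inj^{\perp}(g)$ such that $\exp^{\perp}$ restricted to normal vectors of length $\le \inj^{\perp}$ is a diffeomorphism onto its image, and asserts $L_g(\gamma)\ge 2\inj^{\perp}(g)$ for every non-trivial $(g,\Sigma\times\Sigma)$-geodesic (the point being that if $L<2\inj^{\perp}$ the midpoint would have two distinct preimages under $\exp^{\perp}$, forcing the geodesic to double back on itself and hence be trivial). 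Your argument replaces this injectivity statement by a quantitative Hessian-comparison for the distance function to $\Sigma$, yielding the bound $\min\{2\epsilon_g,\,2/C_g\}$. Both rest on the same object (the normal exponential map and its continuous dependence on $g$); the paper's version is shorter and gives the sharper constant, while yours is more self-contained in that it makes the mechanism behind the lower bound explicit. One small wrinkle in your case (c): if $N$ is one-sided the signed distance $f$ is not globally defined on the collar, and your justification (``a short geodesic is confined to a small ball'') does not quite cover a geodesic that stays inside the thin collar $U_g$ while travelling far along $N$. This is easily repaired: run the same comparison with the unsigned distance $h=d(\cdot,N)$, which is smooth on $U_g\setminus N$ with $|\nabla h|_g=1$ and bounded Hessian, on the interval $(0,t^*)$ up to the first return time, where $h\circ\gamma>0$; the conclusion $L\ge 2/C_g$ is unchanged. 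With that adjustment the proof is complete.
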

\begin{remark}
	\label{rem:ogc-general}
	In case (b) we can consider the compact manifold
	$M$ with smooth boundary $\partial M$ 
	endowed with a Riemannian metric $g$ as subset
	of a complete Riemannian manifold $(M_1,g_1).$
	
\end{remark}

\begin{proof}
	(a) 
	We can choose $a=d(p,q)/2$ if $p\not=q,$
	and $a=\inj/2$ for $p=q,$ here
	$\inj $ is the injectivity radius.
	Here we use that the distance as well as the
	injectivity radius are continuous functions
	on $\mathcal{G}^r(M)$ for $r\ge 2.$
	
	\smallskip
	
	(b) and (c). Here the normal exponential
	map of the submanifold $\partial M$ resp.
	$N$ is defined 
	$\exp^{\perp}: \nu^+(\partial M)\longrightarrow M,$
	resp. $\exp^{\perp}: \nu(N)\longrightarrow M.$
	The set $\nu^+(\partial M)$ is the subset of the normal
	bundle of $\partial M$ of normal vectors pointing inward 
	and $\nu(N)$ is the normal bundle of $N\subset M.$
	Then there is a positive 
	and continuous function $g \in \mathcal{G}^r(M)
	\mapsto 
	\inj^{\perp}(g),$ 
	which is also called the injectivity radius of 
	the normal exponential map 
	$\exp^{\perp},$
	such that the 
	restriction onto the normal vectors
	of length $< \inj^{\perp}$ is a diffeomorphism
	onto its image.
	Any non-trivial
	$(g,\partial M\times \partial M)$-geodesic, resp.
	$(g,N\times N)$-geodesic $\gamma:[0,1]\longrightarrow
	M,$ satisfies $L_g(\gamma)\ge 2\inj^{\perp}(g).$
\end{proof}
We will discuss the following examples in detail:
If for some points $p,q\in M$ we have $N_0=\{p\}, N_1=\{q\},$
then a $(g,\{p\}\times\{q\})$-geodesic is a geodesic
segment joining $p$ and $q.$ Here we also allow
the case $p=q,$ i.e. the case of geodesic loops based
at $p.$ Another important case is
$N_0=N_1=\partial N$ is a compact smooth
boundary of an open subset $N$, then orthogonal
geodesic chords are 
$(g,\partial{N}\times \partial{N})$-geodesics.
And for a compact hypersurface
$N\subset M$ i.e. a compact hypersurface $N$ 
we consider $(g,N\times N)$-geodesics, i.e.
geodesics $c:[0,1]\longrightarrow M$
with $c(l)\in N, c'(l)\in T_{c(l)}^{\perp}N, l=0,1.$
\begin{lemma}
	\label{lem:generic-ogc}
	Let $M$ be a compact manifold of dimension
	$n\ge 2$ with smooth boundary $\partial M.$
	Then there exists a compact manifold $M_1$ without
	boundary, such that $M$ is an open subset of
	$M_1$ and $\partial M$ is a smooth submanifold of
	$M_1.$ 
\end{lemma}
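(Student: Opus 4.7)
The plan is to construct $M_1$ as the \emph{double} of $M$, namely the space obtained by gluing two disjoint copies of $M$ along their common boundary $\partial M$. The first step is to invoke the classical collar neighborhood theorem (Milnor, Brown) to obtain a smooth embedding $\phi:\partial M\times [0,1)\longrightarrow M$ with $\phi(x,0)=x$ for every $x\in\partial M$ and image an open neighborhood of $\partial M$ in $M$.

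Next, take two disjoint copies $M_+$ and $M_-$ of $M$, each equipped with its own collar $\phi_\pm$, and form the quotient $M_1=(M_+\sqcup M_-)/\sim$, where for each $x\in\partial M$ the point $x\in M_+$ is identified with $x\in M_-$. Away from $\partial M$ the smooth atlases of $M_\pm$ descend unchanged to $M_1$. To cover a neighborhood of $\partial M$, glue the two collars into a single chart: define
\beqn
\Phi:\partial M\times(-1,1)\longrightarrow M_1,\qquad
\Phi(x,s)=\begin{cases}\phi_+(x,s),& s\ge 0,\\ \phi_-(x,-s),& s\le 0.\end{cases}
\eeqn
Combining $\Phi$ with smooth charts on $\partial M$ gives a smooth atlas around $\partial M$; smoothness on the overlap $s=0$ follows because both branches restrict to $\mathrm{id}_{\partial M}$ and the two ways of defining tangential derivatives, via $\phi_+$ and via $\phi_-$, agree there. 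The resulting manifold $M_1$ has no boundary and is compact because it is a finite-to-one quotient of the compact space $M_+\sqcup M_-$. The boundary $\partial M$ sits inside $M_1$ as the image of $\partial M\times\{0\}$ under $\Phi$, hence as a smooth codimension-one submanifold, and $M$ appears as the closed subset $M_+\subset M_1$ whose interior is open in $M_1$.

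The only genuine technical input is the collar neighborhood theorem; once a smooth collar is in hand, the verification that $\Phi$ furnishes a smooth chart is a routine compatibility check. The main point to watch is to choose the same collar parametrization on both sides, so that the odd reflection $s\mapsto -s$ produces a smooth, not merely continuous, transition across $\partial M$. Beyond this, compactness of $M_1$, smoothness of $\partial M$ as a submanifold, and the fact that $\mathrm{Int}(M)$ is open in $M_1$ all follow from standard quotient-topology and atlas arguments.
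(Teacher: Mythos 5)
Your construction is correct, but it takes a different (and more classical) route than the paper. You form the double of $M$ directly along $\partial M$, using a collar $\phi:\partial M\times[0,1)\to M$ to define the smooth structure across the gluing locus; this is the standard double construction and it does prove the lemma as stated, since the statement is purely about the differentiable structure. (One small conceptual point: there is nothing to ``check at $s=0$'' --- the chart $\Phi$ \emph{defines} the smooth structure there, and the only compatibility conditions arise on the overlaps with the interior charts of $M_\pm$, which avoid $s=0$ and where the transition maps are the smooth embeddings $\phi_\pm$ composed with $s\mapsto\pm s$. Also, as you note, it is $\mathrm{Int}(M)$ rather than $M$ that is open in $M_1$; the paper's own phrasing has the same imprecision.) The paper instead first attaches a cylindrical neck to $(M,g)$, producing $(M_0,g_0)\supset(M,g)$ whose metric is a Riemannian product $dt^2+h_1$ near the new boundary $\partial M_0$, and only then doubles along $\partial M_0$. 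The resulting manifold is diffeomorphic to your double, but the neck buys something your construction does not automatically give: a \emph{smooth Riemannian metric} $g_1$ on $M_1$ restricting to $g$ on $M$, with $\partial M_0$ totally geodesic. If one doubles $g$ directly across $\partial M$ as in your construction, the glued metric is in general only Lipschitz there (it is smooth only when $\partial M$ is totally geodesic with the metric even in the normal coordinate). Since the later application (Remark~\ref{rem:openanddense} and the proof of Theorem~\ref{thm:four}(b)) needs metrics on $M$ to extend to metrics on $M_1$ so that restriction $\mathcal{G}(M_1)\to\mathcal{G}(M)$ has dense image, the paper's version carries exactly the extra content that is used downstream; with your construction one would have to add a separate metric-extension step (e.g.\ via a collar and a cutoff).
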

\begin{proof}
	It is a standard construction that one can 
	\emph{add a cylindrical neck}
	to the manifold $M$ with smooth boundary,
	i.e. there exists a Riemannian manifold $M_0$
	with metric $g_0$
	such that $M$ is an open submanifold 
	and $g$ is the restriction of $g_0$ to $M.$
	Furthermore, for some positive $\epsilon >0$
	the normal exponential map
	of the hypersurface $\partial M$ defines 
	an injective immersion
	$\phi:(t,x) \in [-\epsilon,0]
	\times \partial M \longmapsto M_0.$
	The Riemannian metric on $M$ near $\partial M$
	then has the form
	$dt^2 + h_t$ for a smooth family 
	$t \in [-\epsilon, 0] \mapsto h_t \in
	\mathcal{G}(\partial M)$
	of Riemannian metrics $h_t$
	on $\partial M.$ 
	Then we define the manifold $M_0$ as the 
	disjoint union of $M$ and the cylinder
	$[-\epsilon,1]\times \partial M \ni (t,x)$
	where we identify $(t,x)$ and $\phi(t,x)$ for all
	$(t,x)\in (-\epsilon,0]\times \partial M.$
	We extend the Riemannian metric on 
	$[-\epsilon,1]\times \partial M$ as $dt^2+h_t$
	such that 
	$h_t=h_1$ for all $t\in[2/3,1].$
	Hence the set $\phi([2/3,1]\times \partial M)$
	is a \emph{cylindrical end,} which is isometric
	to the Riemannian product $[2/3,1]\times(\partial M,h_1).$
	This construction can be found for example
	in~\cite[Section 5]{GZ}.
	
	We take two copies of $(M_0,g_0)$ 
	and identify them 
	along the boundary $\partial M_0.$
	This way we obtain a compact Riemannian manifold
	$(M_1,g_1)$ which consists of two copies of $(M_0,g_0)$
	without boundary, such that
	$\partial M_0$ is a totally geodesic submanifold,
	which is the fixed point set of an isometric involution
	interchanging the two copies of $(M_0,g_0).$
\end{proof}
\begin{remark}
	\label{rem:openanddense}
	In the setting of Lemma~\ref{lem:generic-ogc}
	let $\phi: \mathcal{G}(M_1)\longrightarrow 
	\mathcal{G}(M)$ be the map defined by restricting
	the Riemannian metric $g$ on $M_1$ to the submanifold
	$M,$ i.e. $\phi(g)=g|M.$
	The following statements are an immediate consequence of
	the definition of the strong topology, in particular of
	the definition of a \emph{strong basic neighborhood}
	cf. \cite[Section 2.1]{Hi} applied to  the 
	space of Riemannian metrics on $M$ resp. $M_1.$
	After fixing a Riemannian metric 
	the space $\mathcal{G}(M)$ of Riemannian metrics
	of class $C^k,k\ge 2$ on the manifold $M$ endowed with
	with the strong topology can be seen as an 
	open subset in the Banach
	space of $C^k$-sections of the bundle of 
	symmetric bilinear forms on the manifold $M.$
	If $\mathcal{U}$ is an open
	and dense subset of $\mathcal{G}(M_1)$ then $\phi(\mathcal{U})$
	is an open and dense subset of $\mathcal{G}(M).$
	And if $\mathcal{V}$ is a residual subset of
	$\mathcal{G}(M_1),$ then the restriction 
	$\phi(\mathcal{V})$ is a residual subset of
	$\mathcal{G}(M).$	
\end{remark}
\begin{lemma}
	\label{lem:closed-geodesic} Let $L>0.$
	
	(a) Let $(M,g)$ be a compact Riemannian manifold
	of dimension $n\ge 3$
	and $p,q\in M.$ 
	Then there is an open and dense set $\mathcal{G}^*[L]$ of
	Riemannian metrics such that for any $g\in \mathcal{G}^*[L]$
	no geodesic segment of length $\le L$ joining $p$ and $q$
	is part of 	a closed geodesic.
	
	\smallskip
	
	(b) Let $(M,g)$  a compact Riemannian manifold of 
	dimension $n\ge 2$ and let 
	$N\subset M$ be a compact submanifold of codimension $1.$
	Then there is an open and dense set $\mathcal{G}^*[L]$ of
	Riemannian metrics on $M$ such that for any $g\in \mathcal{G}^*[L]$
	no $(g,N \times N)$-geodesic of length $\le L$
	is part of 	a closed geodesic.
	
\end{lemma}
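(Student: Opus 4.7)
The plan is to prove (a) in detail; part~(b) is parallel upon replacing the admissible product boundary condition $\{p\}\times\{q\}$ (Lemma~\ref{lem:admissible}(a)) by $N\times N$ (Lemma~\ref{lem:admissible}(c)).

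First I would set up the non-degenerate regime. Since $\{p\}\times\{q\}$ is admissible, \cite[Theorem 5.10]{BG2010} provides a residual subset of $\mathcal{G}(M)$ of metrics for which every $(g,\{p\}\times\{q\})$-geodesic is non-degenerate. Combined with the openness of non-degeneracy for segments of bounded length (via the implicit function theorem applied to the critical-point equation of $E_g$ on the Hilbert manifold $\Omega_{\{p\}\times\{q\}}(M)$, on which $E_g$ satisfies the Palais--Smale condition by \cite[Thm.2.4]{Gr}), this upgrades to an open dense subset $\mathcal{G}_{\mathrm{nd}}[L]\subset\mathcal{G}(M)$ of metrics for which every segment from $p$ to $q$ of length $\le L+1$ is non-degenerate. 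For $g\in\mathcal{G}_{\mathrm{nd}}[L]$, non-degeneracy together with Palais--Smale forces there to be only finitely many segments $c_1,\ldots,c_{N(g)}$ from $p$ to $q$ of length $\le L$.

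For density, I would fix $g\in\mathcal{G}_{\mathrm{nd}}[L]$ and let $A\subseteq\{1,\ldots,N(g)\}$ be the indices for which $c_i$ is part of a closed geodesic $\gamma_i$. When $p\ne q$, or when $p=q$ and $\gamma_i$ strictly contains $c_i$ as point set, I would pick $x_i\in\gamma_i\setminus(\{p,q\}\cup\bigcup_j c_j([0,1]))$ together with pairwise disjoint geodesic balls $B_{x_i}(\eta_i)$ avoiding $\{p,q\}\cup\bigcup_j c_j([0,1])$, and perform a small perturbation of the metric of the type supplied by Lemma~\ref{lem:perturbation} inside each $B_{x_i}(\eta_i)$. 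The resulting metric $\tilde g$ agrees with $g$ outside $\bigcup_{i\in A}B_{x_i}(\eta_i)$, so each $c_j$ persists as a $\tilde g$-geodesic segment from $p$ to $q$ of unchanged length; however the geodesic extension of $c_i$ past $q$, which in $(M,g)$ continues along $\gamma_i$ through the ball $B_{x_i}(\eta_i)$, is deflected there and no longer closes up at $p$ with matching velocity. The residual case $p=q$ and $\gamma_i=c_i$ (a prime closed geodesic) is handled via Lemma~\ref{lem:perturbation2} by perturbing near $p$ so that the initial and terminal velocities of $c_i$ become linearly independent. A sufficiently small choice of $\eta_i$ keeps $\tilde g$ in $\mathcal{G}_{\mathrm{nd}}[L]$ and prevents new segments of length $\le L$ from appearing.

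The main obstacle is openness. I would define $\mathcal{G}^*[L]\subseteq\mathcal{G}_{\mathrm{nd}}[L]$ as the topological interior of the set of metrics satisfying the conclusion; openness is then automatic and the task reduces to showing that the $\tilde g$ from the density step is actually an interior point. The subtlety is that ``part of a closed geodesic'' is not a priori a closed condition on $g$: a sequence $g_n\to g$ in $\mathcal{G}_{\mathrm{nd}}[L]$ could produce closed $g_n$-geodesics through $p$ and $q$ containing a short arc, of periods $T_n\to\infty$, so mere compactness does not supply a limiting closed $g$-geodesic. To overcome this I would use that on $\mathcal{G}_{\mathrm{nd}}[L]$ the segments $c_i(g)$ and their initial velocities $v_i(g):=c_i'(0)\in T_pM$ vary smoothly in $g$ by the implicit function theorem, and that for each fixed $T>0$ the condition $\phi^g_T(p,v_i(g))=(p,v_i(g))$ on the geodesic flow of $g$ cuts out a positive-codimension locus in parameter space; the perturbations in the density step can be chosen transversally off all such loci, making the deflection of the extension of $c_i$ stable under further $C^k$-small metric perturbations. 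This renders $\tilde g$ an interior point of the good set, establishing openness (and hence open density) of $\mathcal{G}^*[L]$ and completing~(a). Part~(b) runs identically.
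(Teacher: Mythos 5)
Your overall strategy --- perturb each offending segment so that its geodesic continuation no longer closes up --- is reasonable for density, but your openness argument has a genuine gap, and it sits exactly at the point you yourself flag. You correctly observe that ``is part of a closed geodesic'' is not a closed condition on the metric because the period of the ambient closed geodesic is unbounded, but the fix you propose does not work: for each fixed $T$ the return condition $\Phi^g_T(v_i(g))=v_i(g)$ does cut out a positive-codimension locus whose complement is open, but the set you need $\tilde g$ to be interior to is the intersection of these complements over \emph{all} $T>0$, and an intersection of infinitely many open sets need not be open. Choosing the perturbation ``transversally off all such loci'' only places $\tilde g$ in that intersection; nothing prevents a further $C^k$-small perturbation from making the continuation of $c_i$ close up after a very long time. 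The paper avoids this entirely by never controlling anything but closed geodesics of energy $\le L$: it works inside the bumpy set of \cite[Thm.1]{R2022}, where there are only finitely many closed geodesics of energy $\le L$, they persist continuously under perturbation, and no new ones of energy $\le L$ appear (the Anosov argument); the condition that none of these finitely many persisting curves passes through both $p$ and $q$ (resp.\ meets $N$ orthogonally at two parameters) is then manifestly open. As the lemma is used in the paper --- to feed Lemma~\ref{lem:intersection}, which only requires the segment not to wrap a closed geodesic of period $\le 1$, hence of length $\le L$ --- the bounded-energy reading is the intended one; proving the literal ``part of \emph{any} closed geodesic'' version would need a new compactness or quantitative input you have not supplied.

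There is also a gap in your density step: you require $x_i\in\gamma_i\setminus\bigl(\{p,q\}\cup\bigcup_j c_j([0,1])\bigr)$, but this set can be empty. If $p\ne q$ and a closed geodesic $\gamma_i$ of length $\le 2L$ passes through both $p$ and $q$, then the two arcs into which $p$ and $q$ divide $\gamma_i$ are themselves segments $c_j, c_{j'}$ of length $\le L$ and their union is all of $\gamma_i$, leaving no admissible $x_i$; analogous coverings occur in case (b). The paper's perturbations are instead localized at $p$ and $q$ themselves in case (a), and at the finitely many orthogonal-incidence points in case (b) via Lemma~\ref{lem:perturb-closed}, which keeps the curve a closed geodesic while destroying orthogonality to $N$, so no auxiliary point off the segments is ever needed. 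Finally, deflecting the continuation of $c_i$ at $x_i$ only guarantees that it leaves $\gamma_i$; to conclude that it is not part of some \emph{other} closed geodesic of $\tilde g$ you again need the bounded-energy bookkeeping above, so the two gaps are really the same one.
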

\begin{proof}

	Assume that $\gamma: S^1=[0,1]/\{0,1\}\longrightarrow M$ is a closed
	geodesic. We show that the set
	$A(\gamma):=
	\{t \in S^1; \gamma'(t)\in T_{c(t)}^{\perp}N_l\}, l=0,1$ is finite.
	Since the boundary condition is admissible 
	there is a positive $a>0$ such
	that for $s,t \in A, s\not=t$ we have
	$|t-s|L(\gamma)\ge a.$ This implies that $A$ is a
	finite set.
	
	For $L>0$ let $\mathcal{G}^*[L]$
	be the set of Riemannian metrics for which
	all closed geodesics with energy $\le L$ are non-degenerate
	and simple, cf.~\cite[Thm.1]{R2022}
	such that no restriction onto a subinterval defines
	- up to linear reparametrization - a
	geodesic segment between $p$ and $q$ in case (a)
	resp. a $(g,N\times N)$-geodesic in case (b).
	This set is open, we have to show that it is 
	a dense subset of $\mathcal{G}(M).$
	A metric $g \in \mathcal{G}^*[L]$ has only finitely
	many geometrically distinct prime closed geodesics 
	$c_j: S^1 \longrightarrow M, j=1\ldots,k$
	of energy
	$\le L$ and there is an open subset
	$\mathcal{U}\subset \mathcal{G}^*[L]$ and
	continuous functions $g\in \mathcal{U}\mapsto 
	v_j(g)\in TM$ such that
	the curve $t\in \R \mapsto 
	c_j(t)=\Phi_g^t(v_j(g)), t\in S^1$
	is a closed geodesic of $g$ and there are
	- up to geometric equivalence - no further
	closed geodesics of length $\le L$ of the metric $g,$
	cf.~\cite[p.12-13]{An}, resp. the Proof of 
	Part (b) of Theorem~\ref{thm:four}.
	Here $\Phi^t_g: TM \longrightarrow TM, t\in \R$ denotes
	the \emph{geodesic flow,} i.e. for $v\in T_pM,
	t \in \R \mapsto \Phi_g^t(v)\in TM$
	is the periodic orbit in the tangent bundle whose projection
	on the manifold is the geodesic $c:\R \longrightarrow
	M$ with initial point $p=c(0)$ and  initial
	direction $v=c'(0).$
	
	Hence the set $\widetilde{A}=\bigcup_{i=1}^k
	\{c_j(t)\,;\, t\in A(c_j)\}$
	is finite. 
	
	\smallskip
	
	(a) We can perturb the metric in a sufficiently small
	neighborhood of the points $p$ and $q$ using 
	Lemma~\ref{lem:perturbation}
	such that the perturbed closed geodesics of length $\le L$
	do not meet $p$ and $q.$
	
	\smallskip
	
	(b) We perturb the metric in a
	sufficiently small neighborhood of $p
	=c_j(t)\in \widetilde{A}, t\in A(c_j)$
	such that $c_j$ remains a closed geodesic but
	$c_j'(t)\not\in T_{c_j(t)}^{\perp}N_l$
	with respect to the perturbed metric.
	
	This is possible as the following Lemma shows.
\end{proof}
\begin{lemma}
	\label{lem:perturb-closed}
	Let $D^{n-1}_{2\eta}=\{x\in \R^{n-1}\,;\,
	\|x\|\le 2\eta\}$ be the Euclidean $(n-1)$-disc of radius
	$2\eta$ for $\eta>0.$
	On $(t,x)=(t,x_1,\ldots,x_{n-1})\in[-2\eta, 2\eta]\times D^{n-1}_{2\eta}$
	a Riemannian metric is given by
	$$dt^2+\sum_{i,j=1}^{n-1} g_{ij}(t,x)dx_i dx_j,$$
	hence $\gamma(t)=(t,0,\ldots,0), t \in [-2\eta,2\eta]$
	is a geodesic of $g$ parametrized by arc length
	orthogonal at $p=\gamma(0)=(0,\ldots,0)$
	to the hypersurface $\Sigma:=\{0\}\times D^{n-1}_{2\eta}.$
	Then in any neighborhood of the metric $g$ there is 
	a metric $\overline{g}$ for which $\gamma$ is still
	a geodesic parametrized by arc length
	and which is not
	orthogonal to $\Sigma$ at $p$
	with respect to $\overline{g}.$	
\end{lemma}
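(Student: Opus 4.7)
The plan is to construct $\bar g = g + h$, where $h$ is a smooth symmetric $(0,2)$-tensor supported in an arbitrarily small neighborhood of $p = (0,\ldots,0)$ inside $(-2\eta, 2\eta) \times D^{n-1}_{2\eta}$, in such a way that $\gamma(t) = (t, 0, \ldots, 0)$ remains a unit-speed $\bar g$-geodesic while $\bar g(\partial_t, \partial_{x_1})(p) \neq 0$. Using
\[
\bar\Gamma^k_{tt} = \tfrac{1}{2} \bar g^{k\lambda}\!\left(2 \partial_t \bar g_{t\lambda} - \partial_\lambda \bar g_{tt}\right)
\]
and the nondegeneracy of $\bar g^{k\lambda}$, the condition that $\gamma$ be a unit-speed $\bar g$-geodesic reduces, along the curve $\{x = 0\}$, to
\[
\bar g_{tt}(t, 0) = 1, \qquad 2 \partial_t \bar g_{ti}(t, 0) = \partial_i \bar g_{tt}(t, 0) \quad (i = 1, \ldots, n-1).
\]

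The main obstacle is that the naive off-diagonal perturbation $h_{t1}(t, x) = \delta \phi_1(t) \phi_2(x)$, with $\phi_1(0) = \phi_2(0) = 1$ and compact support, cannot by itself satisfy these constraints: compact support in $t$ forces $\phi_1'(t) \neq 0$ somewhere on $\gamma$, which breaks the second identity. The idea is to compensate by adding a transverse perturbation of $g_{tt}$ which \emph{vanishes} along $\gamma$ (so the first identity is preserved) but whose $x_1$-derivative along $\gamma$ exactly cancels the error. Concretely, fix smooth bump functions $\phi_1$ supported in $(-2\eta, 2\eta)$ with $\phi_1(0) = 1$ and $\phi_2$ with $\phi_2(0) = 1$ and support in a small ball around $0 \in \R^{n-1}$, and for $\delta > 0$ set
\[
h_{t1} = h_{1t} = \delta \phi_1(t)\, \phi_2(x), \qquad h_{tt} = 2\delta\, \phi_1'(t)\, x_1\, \phi_2(x),
\]
with all other components of $h$ vanishing, and define $\bar g = g + h$.

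A direct calculation then verifies both conditions above identically in $t$: along $\gamma$ one has $h_{tt}(t,0) = 0$, and for $i = 1$ both sides equal $2\delta \phi_1'(t)$ while for $i \neq 1$ both sides vanish. Hence $\gamma$ is a $\bar g$-geodesic parametrized by arc length; at the same time $\bar g_{t1}(p) = \delta \neq 0$, so $\gamma'(0)$ is no longer $\bar g$-orthogonal to $T_p \Sigma$. Since $\|h\|_{C^r}$ is bounded by a constant (depending on $r$ and on the fixed bump functions) times $\delta$, and $g$ is positive definite, for $\delta$ small enough $\bar g$ is a Riemannian metric in any prescribed $C^r$-neighborhood of $g$, agreeing with $g$ outside an arbitrarily small neighborhood of $p$.
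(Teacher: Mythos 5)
Your construction is correct: the computation of the lowered Christoffel symbols $\bar\Gamma_{\lambda,tt}=\tfrac12(2\partial_t\bar g_{t\lambda}-\partial_\lambda\bar g_{tt})$ along $\{x=0\}$ does show that the compensating term $h_{tt}=2\delta\,\phi_1'(t)\,x_1\,\phi_2(x)$ exactly cancels the error created by $\partial_t h_{t1}$, that $\bar g_{tt}(t,0)=1$ keeps $\gamma$ unit-speed, and that $\bar g(\partial_t,\partial_{x_1})(p)=\delta\neq 0$ destroys orthogonality to $\Sigma$; positivity and $C^r$-closeness for small $\delta$ are immediate. The route is genuinely different from the paper's. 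The paper does not write down a perturbation tensor at all: it performs the shear of coordinates $s=t+\epsilon f(t)x_1$ and declares $\overline g=ds^2+\sum g_{ij}(t_\epsilon(s,x_1),x)\,dx_i\,dx_j$, so that $\gamma$ is manifestly a unit-speed geodesic because $\overline g$ retains the block form $ds^2+(\text{metric in }x)$ in the new coordinates, with no Christoffel computation needed; non-orthogonality follows because $\gamma$ is $\overline g$-orthogonal to $\{s=0\}$ but $T_p\Sigma=T_p\{t=0\}\neq T_p\{s=0\}$. If you expand the paper's $\overline g$ in the original $(t,x)$-coordinates you find $\overline g_{tt}=1+2\epsilon f'(t)x_1+O(\epsilon^2)$ and $\overline g_{t1}=\epsilon f(t)+O(\epsilon^2)$, i.e.\ your tensor $h$ (with $\phi_2\equiv 1$) is precisely the linearization in $\epsilon$ of the paper's construction --- which explains why your ansatz works. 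What each approach buys: the paper's argument avoids all curvature/connection computations and makes the geodesic property structural, while yours is more elementary and hands-on, and has the mild additional advantage that the cutoff $\phi_2$ localizes the perturbation to an arbitrarily small neighborhood of the single point $p$ rather than of the whole segment $\mathrm{supp}(f)\times\{0\}$ spread across the $x$-disc.
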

\begin{proof}
	Choose a smooth bump
	function $f:[-2\eta,2\eta]\longrightarrow [0,1]$
	with $f(t)=0$ for $|t|\ge \eta,$
	$f(t)=1 $ for $|t|\le \eta/2.$
	Let $s=t+\epsilon f(t)x_1.$
	For sufficiently small $\epsilon>0$ 
	with inverse function $t_{\epsilon}(s,x_1)$
	in the coordinates $s,x_1,\ldots,x_{n-1}$ the metric is
	of the form
	\begin{equation*}
		\frac{
			ds^2
			+2\epsilon f(t) ds dx_1+\epsilon^2 f^2(t) dx_1^2
		}{(1-\epsilon f'(t))^2}
		+\sum_{i,j=1}^{n-1} 
		g_{ij}(t_{\epsilon}(s,x_1),x_1,\ldots,x_{n-1})dx_i dx_j\,.
	\end{equation*}
	Then for sufficiently small $\epsilon>0$  
	the metric $\overline{g}$ given by
	$$
	ds^2+\sum_{i,j=1}^{n-1} 
	g_{ij}(t_{\epsilon}(s,x_1),x_1,\ldots,x_{n-1}) dx_i dx_j
	$$
	is sufficiently close go $g$ in the $C^k$-topology for 
	$k\ge 2$ and the
	the curve $x_1=\ldots=x_{n-1}=0$ coincides with $\gamma$
	and is a geodesic of $\overline{g}$ parametrized by
	arc length. At the point $p=\gamma(0)$ the curve
	is orthogonal to the hypersurface $s=0$ but not to the
	hypersurface $\Sigma=\{t=0\}.$	
\end{proof}
With the help of Lemma~\ref{lem:closed-geodesic} we can
prove the following main result, from which we can derive
the theorems stated in the Introduction:
\begin{theorem}
	\label{thm:four}
	Let $(M,g)$ be a compact Riemannian manifold
	of dimension $n\ge 3.$
	
	\smallskip
	
	(a)
	Let $M$ be a compact differentiable manifold of
	dimension $n\ge 3.$
	Let $p,q\in M.$ Then for a $C^k$-generic Riemannian metric
	with $k\ge 2$
	on $M$ all geodesic segments between $p$ and $q$
	are non-degenerate and simple, and any geometrically
	distinct geodesic segments between $p$ and $q$
	do not have interior intersection.
	
	\smallskip
	
	(b) Let $M$ be a compact differentiable manifold of
	dimension $n\ge 3$ with a smooth boundary $\partial M.$
	Then for a $C^k$-generic Riemannian metric on $M$
	with $k\ge 2$
	all orthogonal geodesic chords are non-degenerate and simple,
	and any two geometrically distinct orthogonal geodesic
	chords do not intersect.
	
	\smallskip
	
	(c) Let $M$ be a compact differentiable manifold 
	(without boundary) of
	dimension $n \ge 3$ and let $N\subset M$ be a compact
	submanifold of codimension $1.$ Then for a 
	$C^k$-generic metric on $M$ with $k\ge 2$ all
	$(g,N\times N)$-geodesics
	are non-degenerate and simple, and any two geometrically
	distinct 
	$(g,N\times N)$-geodesics do not intersect.
\end{theorem}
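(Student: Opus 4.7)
My plan is to prove the three parts uniformly by a length-truncation argument. Let $\mathcal{P}$ denote the product boundary condition appropriate to the case, namely $\{p\}\times\{q\}$, $\partial M\times\partial M$, or $N\times N$; by Lemma~\ref{lem:admissible} it is admissible. For each integer $m\ge 1$ let $\mathcal{G}_m$ consist of those $C^k$-metrics $g$ such that every $(g,\mathcal{P})$-geodesic of length $\le m$ is non-degenerate, is simple, is not part of a closed geodesic, and has no interior intersection with any other geometrically distinct $(g,\mathcal{P})$-geodesic of length $\le m$. The theorem will follow by showing that each $\mathcal{G}_m$ is open and dense in $\mathcal{G}(M)$, since then $\bigcap_{m\ge 1}\mathcal{G}_m$ is residual and consists precisely of metrics with the required properties.

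\textbf{Density of $\mathcal{G}_m$.} Fix $m$. Start from the residual set provided by \cite[Theorem 5.10]{BG2010} on which all $(g,\mathcal{P})$-geodesics are non-degenerate. Admissibility (Lemma~\ref{lem:admissible}) together with the Palais--Smale property for $E_g$ implies that the set of geometrically distinct $(g,\mathcal{P})$-geodesics of length $\le m$ is finite and depends continuously on $g$ in a neighbourhood; refine by the open dense subset of Lemma~\ref{lem:closed-geodesic} so that none of these geodesics is part of a closed geodesic. Enumerate them as $\gamma_1,\dots,\gamma_k$ and, by Lemma~\ref{lem:intersection}, collect the finitely many interior self-intersection points of the $\gamma_j$'s together with the finitely many interior intersection points of all pairs $(\gamma_i,\gamma_j)$ into a list $p_1,\dots,p_N\in\mathrm{Int}(M)$. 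Choose $\eta>0$ smaller than one quarter of the injectivity radius so that the balls $B_{p_\ell}(2\eta)$ are pairwise disjoint and disjoint from $N_0\cup N_1$. At each $p_\ell$ finitely many arcs of $\gamma_1,\dots,\gamma_k$ meet with common midpoint $p_\ell$ after reparametrization by arc length, so one application of Lemma~\ref{lem:perturbation} at $p_\ell$ decrosses them via an arbitrarily small perturbation supported in $B_{p_\ell}(\eta)$. Performing the perturbations successively at $p_1,\dots,p_N$ yields a metric $\overline{g}$ in any prescribed $C^k$-neighbourhood of $g$ which lies in $\mathcal{G}_m$. In the loop case $p=q$ of (a), before this step one first applies Lemma~\ref{lem:perturbation2} to each loop in the list whose trajectory passes through $p$ at an interior parameter, obtaining loops whose initial and terminal velocities are linearly independent; after this reduction $p$ itself is no longer an interior self-intersection point of any remaining loop, and the perturbation scheme above applies to the remaining interior double points.

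\textbf{Openness and the three cases.} Openness of $\mathcal{G}_m$ is standard: non-degenerate critical points of $E_g$ of bounded length depend continuously on $g$ in the $C^1$-topology, and the absence of an interior intersection between two such non-degenerate geodesics is preserved under small perturbations because by Lemma~\ref{lem:intersection} such intersections, if they existed, would be isolated and hence stable. Case (a) follows directly. For case (c) the submanifold $N$ already sits in the boundaryless manifold $M$, so no doubling is needed. For case (b), invoke Lemma~\ref{lem:generic-ogc} to embed $(M,g)$ into a closed manifold $(M_1,g_1)$ in which $\partial M$ is a smooth closed hypersurface; every orthogonal geodesic chord of $(M,g)$ is a $(g_1,\partial M\times\partial M)$-geodesic of $(M_1,g_1)$ whose interior lies in $M$. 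Applying case (c) to $N=\partial M$ inside $M_1$ and pulling back along the restriction map of Remark~\ref{rem:openanddense} yields the corresponding open dense subset of $\mathcal{G}(M)$. Two geometrically distinct orthogonal geodesic chords cannot share a boundary point because orthogonality at the boundary pins the initial direction, so establishing the absence of interior intersections is enough.

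\textbf{Main obstacle.} The most delicate point is to ensure that the local $C^k$-perturbations of Lemmas~\ref{lem:perturbation} and~\ref{lem:perturbation2}, which alter the metric inside small balls near $p_\ell$, do not destroy the non-degeneracy or alter the finite list of $(g,\mathcal{P})$-geodesics of length $\le m$. This is handled by shrinking each perturbation parameter so that the resulting metric stays inside the open neighbourhood on which the finite list, its continuous parametrization by $g$, and the Lemma~\ref{lem:closed-geodesic} property all persist; finiteness of the list of geodesics and of intersection points guarantees that finitely many such shrinkings suffice.
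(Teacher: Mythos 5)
Your overall scheme (length truncation, openness plus density of each $\mathcal{G}_m$, finiteness of geodesics via non-degeneracy and Palais--Smale, finiteness of intersection points via Lemma~\ref{lem:intersection}, decrossing via Lemma~\ref{lem:perturbation}, doubling for case (b)) is the same as the paper's. There is, however, a genuine gap in case (a), precisely at the point where you require the balls $B_{p_\ell}(2\eta)$ around the interior intersection points to be ``disjoint from $N_0\cup N_1$.'' For $\mathcal{P}=\{p\}\times\{q\}$ this is impossible whenever some geodesic segment from $p$ to $q$ passes through $p$ or $q$ again at an interior parameter: then $p$ (or $q$) itself is an interior double point, and Lemma~\ref{lem:perturbation} cannot be applied there, because a perturbation of the metric supported in a ball around $p$ that decrosses the arcs will in general destroy the boundary condition $\gamma(0)=p$. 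You address this only in the loop case $p=q$, and there your use of Lemma~\ref{lem:perturbation2} is not what the lemma delivers: it makes the \emph{new} velocities $v(s),w(s)$ linearly independent of the \emph{old} ones $v,w$; it does not make the initial and terminal velocities of a single loop independent of each other, and in any case controlling $c_s'(0),c_s'(1)$ says nothing about whether $c_s$ still hits $p$ at an interior time $t_1\in(0,1)$. So the conclusion ``after this reduction $p$ is no longer an interior self-intersection point of any remaining loop'' does not follow as stated.

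The paper closes this gap by a primeness argument that is missing from your proposal: a non-prime loop (or a segment from $p$ to $q$ hitting $p$ or $q$ in its interior) is a smooth concatenation of \emph{prime} loops at $p$ or $q$ and prime segments, and smoothness at the concatenation parameter forces the terminal velocity direction of one prime piece to coincide with the initial velocity direction of the next. Applying Lemma~\ref{lem:perturbation2} to the finitely many \emph{prime} pieces of length $\le L$ moves these finitely many directions off one another, so that no concatenation can be a geodesic; only then is every segment of length $\le L$ prime, every interior intersection point lies in $M\setminus\{p,q\}$, and your decrossing step via Lemma~\ref{lem:perturbation} becomes applicable. For $p\ne q$ one must run this for loops at $p$, loops at $q$, and segments joining them (the sets $\mathcal{G}_p[L]\cap\mathcal{G}_q[L]\cap\mathcal{G}_{\{p\}\times\{q\}}(M)[L]$ in the paper). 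A secondary, smaller inaccuracy: your openness argument for the no-intersection condition argues that intersections ``would be isolated and hence stable,'' which is the persistence of intersections rather than of their absence; the correct reason is that disjoint compact sets (the images away from the shared endpoints, where the segments separate at a definite angle) stay disjoint under $C^1$-small changes of the finitely many non-degenerate geodesics.
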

\begin{proof}
	We first consider the three cases 
	(a), (b), (c) at the same time,
	i.e. we set $\mathcal{P}=N_0\times N_1=\{p\}\times\{q\},$
	or $\mathcal{P}=\partial{M}\times \partial{M},$
	or $\mathcal{P}=N \times N.$
	
	It follows from Lemma~\ref{lem:closed-geodesic}
	and from the result~\cite[Thm.5.1]{BG2010},
	cf. also \cite[p.347]{BG2010},
	that for $L>0$ the set 
	$\mathcal{G}_{\mathcal{P}}(M)[L]$ of metrics 
	$		g\in \mathcal{G}_{\mathcal{P}}(M)$
	for which all $(g,\mathcal{P})$-geodesics
	of length $\le L$ are nondegenerate and are not part of
	a closed geodesic is an open
	and dense subset of $\mathcal{G}_{\mathcal{P}}
	=\mathcal{G}_{\mathcal{P}}(M).$
	This holds since in all three cases
	(a),(b),(c) the boundary
	conditions $\mathcal{P}$ are non-degenerate and
	admissible, cf. Lemma~\ref{lem:admissible}.
	
	Here one has to note that in case (b) one has to
	consider the differentiable manifold $M$ to be a
	subset of a compact manifold $M_1$ without boundary,
	cf. Lemma~\ref{lem:generic-ogc}.	
	Then one can apply~\cite[Thm.5.1]{BG2010}
	and conclude that the set 
	$\mathcal{G}_{\partial M\times \partial M}(M_1)[L]$ 
	of metrics $g\in \mathcal{G}(M_1)$ for which
	all $(g,\partial M\times \partial M)$-geodesics
	of length $\le L$ are non-degenerate is an
	open and dense subset of 
	$\mathcal{G}_{\partial M\times\partial M}(M_1).$
	Then the set
	$\mathcal{G}_{\partial M\times \partial M}(M)$
	contains all restrictions $g|M$ of metrics
	$g \in \mathcal{G}_{\partial M\times \partial M}(M_2)[L],$
	and is therefore also dense in $\mathcal{G}(M),$
	cf. Remark~\ref{rem:openanddense}.
	
	For any $L>0$ we define the set 
	$\mathcal{G}_{\mathcal{P}}^{*}(M)[L]$ of metrics 
	$g\in \mathcal{G}^L(M)$
	for which all 
	$(g,\mathcal{P})$-geodesics 
	of length $\le L$ are nondegenerate
	and simple and such that
	for any pair
	$\gamma,\delta$  of geometrically distinct
	$(g,\mathcal{P})$-geodesics of length $\le L$
	in case of $\mathcal{P}=\{p\}\times\{q\}$
	the interior intersection set
	$I_1(\gamma,\delta)$ is empty, and in the other
	two cases the intersection 
	$\gamma(I)\cap \delta(I)=\emptyset.$
	
	We conclude from~\cite[Thm.2.4]{Gr}
	that the energy functional
	$E_g:c \in \Omega_{\mathcal{P}}(M)
	\longmapsto E_g(c)\in \R$ satisfies the 
	Palais-Smale condition. We can apply this result since
	$M$ is compact, hence any Riemannian metric
	on $M$ is complete.
	
	Hence for $g\in \mathcal{G}_{\mathcal{P}}(M)[L]$
	the restriction
	$E_g:\Omega_{\mathcal{P}}(M)\cap \{E\le L\}
	\longrightarrow \R$
	is a Morse function with only finitely many critical
	points, i.e. there are only finitely many
	geometrically distinct 
	$(g,\mathcal{P})$-geodesics $c_j,j=1,\ldots,k$
	of energy $\le L.$ 
	
	The self-adjoint operator
	$H_c$ representing the hessian $d^2 E_g(c)$ at a 
	$(g,\mathcal{P}
	)$-geodesic $c$
	is of the form $H_c=\mathbbm{1}+k_c,$ i.e.  the sum of the identity and 
	a compact operator $k_c.$ 
	A proof can be found in\cite[Lem.2.5.2]{Kl}
	or in \cite[Sec.5.1]{BG2010}, where one can set
	$g_0=g_R,$ hence $A=\mathbbm{1}.$	
	Since $0$ is not an eigenvalue the
	inverse $H_c^{-1}$ is a bounded linear operator. 
	Therefore we can use the implicit function theorem
	for Banach manifolds, cf.~\cite[Thm.15.1]{De}
	and conclude:
	There is a neigbhourhood 
	$\mathcal{U}$ of $g$ and continuous functions
	$g\in \mathcal{U}\longmapsto 
	c_j[g] \in \Omega_{\mathcal{P}}(M),
	j=1,\ldots,k$ such that
	$c_j[g]$ is a $(g,\mathcal{P})$-geodesic of $g \in \mathcal{U}.$
	
	One can choose the open neighborhhod $\mathcal{U}$
	sufficiently small such that there are no 
	further
	$(g,\mathcal{P})$-geodesics of $g$ with $E_g(\gamma)\le L,$
	cf. the argument in~\cite[p.12-13]{An}.
	
	Since the geodesics $c_j$ are not part
	of a closed geodesic we conclude
	from Lemma~\ref{lem:intersection}:
	The union $DIP_1(L)$
	of the double points $DP_1(c_j), j=1,\ldots,k$ and the
	intersection points 
	$I(c_j,c_l), j\not=l, j,k\in \{1,\ldots,k\}$
	is finite. 
	
	Let $x \in DIP_1(L).$ 
	In cases (b)(c) it follows that
	if $x=c_j(t)$ for some $j=1\ldots,k$ then $t \in (0,1).$
	In case (b) a geodesic chord
	$c_j:[0,1]\longrightarrow M$ satisfies
	$c_j(t)\not\in \partial M$ for all $t\in (0,1),$
	hence $DIP_1(L)\cap \partial M=\emptyset.$
	
	In case (c) if $x\in DIP_1(L)$ satisfies
	$x=c_j(0)=c_j(t)$ for some $t \in (0,1]$ then 
	$c_j$ lies on a closed geodesic which is not
	possible by assumption. If $x=c_j(0)=c_k(t)$ for
	$j\not=k$ then $t\not\in\{0,1\}$ since otherwise
	the geodesics $c_j,c_k$ lie on the same geodesic
	line $\gamma: \R\longrightarrow M$ since $N$ has codimension one.
	If $x=c_j(0)=c_i(t)$ for $i,j=1,\ldots,k, 0<t<1$
	then we can use the perturbation result
	Lemma~\ref{lem:perturbation} and obtain a 
	Riemannian metric for which the geodesic segment
	$c_j$ in the neighborhood remains unchanged
	and the geodesic segment $c_k$ 
	in the neighboorhood is perturbed 
	such that $c_j$ and $c_k$ do not intersect in the
	neighborhood of $x.$
	
	Now we consider case (a).
	
	We first assume $p=q.$ For a Riemannian metric
	$g \in \mathcal{G}_{\{p\}\times \{p\}}(M)[L]$
	for some $L>0$
	there are only finitely many geometrically distinct
	and non-degenerate 
	geodesic loops $\gamma_1,\ldots,\gamma_N$ of length
	$\le L.$ Assume that - after a possible renumbering -
	the geodesic loops $\gamma_1,\ldots, \gamma_m, m \le N$ are
	the prime geodesic loops of length $\le L.$
	Here a geodesic loop $\gamma: [0,1]/\{0,1\} \longrightarrow M$
	is prime, if there is no $t_1 \in (0,1)$ with
	$p=c(t_1).$ 
	Let $v_{j,l}=\gamma_j'(l), j=1,\ldots,m, l\in \{0,1\}$
	be the sequence of tangent vectors of the prime geodesic
	loops at $p.$ We use the perturbation result
	Lemma~\ref{lem:perturbation2} which shows that
	there is an open and dense subset
	$\mathcal{G}_p[L]\subset \mathcal{G}_{\{p\}\times\{p\}}(M)[L]$
	for which all geodesic loops at $p$ of length $\le L$ are 
	prime. 
	Note here that if $m<N,$ then the geodesic loops
	$\gamma_{m+1},\ldots,\gamma_N$ are not prime, but their
	prime parts are loops $\gamma_j, 1\le j\le m.$ 
	Since after the perturbation these prime geodesic loops
	cannot be concatenated to produce a non-prime geodesic
	loop the perturbed loops $\gamma_j[g],j\ge m+1$ have
	to be prime, too.
	
	In case $p\not=q$ we consider the space
	$\mathcal{G}_{p,q}[L]=\mathcal{G}_p[L]\cap
	\mathcal{G}_q[L]
	\cap \mathcal{G}_{\{p\}\times \{q\}}(M)[L].$ 
	Hence for $g\in \mathcal{G}_{p,q}[L]$ all geodesic loops at $p$ and $q$
	of length $\le L$ are non-degenerate and prime 
	and there are only finitely many geometrically distinct
	geodesic loops at $p$ and $q.$
	And 
	all geodesic
	segments joining $p$ and $q$ of length $\le L$ are
	non-degenerate and there are only finitely many geometrically
	distinct ones. Hence the set of tangent vectors	
	of geodesic loops at $p$ resp. $q$ of length $\le L$
	as well as the set of tangent vectors of prime geodesic segments
	joining $p$ and $q$ of length $\le L$ at $p$ and $q$ is finite.
	Therefore we can use the perturbation lemma~\ref{lem:perturbation2}
	a finite number of times to obtain a metric for which all
	geometric segments joining $p$ and $q$ are prime.
	Here the observation is that a geodesic segment between $p$ and
	$q$ which is not prime contains either distinct prime geodesic segments
	joining $p$ and $q$ or contains prime geodesic segments and
	geodesic loops at $p$ or at $q.$
	
	Therefore this argument shows that for points $p,q$ (here
	we also allow $p=q$) the set 
	$\mathcal{G}'_{\{p\}\times\{q\}}[L]$ of Riemannian metrics 
	for which all geodesic segments 
	of length $\le L$ 
	joining $p$ and $q$ are non-degenerate
	and prime is an open and dense subset.
	If now $x \in DIP_1(L), x\not\in \{p,q\}$ 
	we can choose $\eta\in (0,\inj/3)$ 
	such that the following holds:
	If a geodesic $c_j$ for some $j=1,\ldots,k$
	enters the geodesic ball
	$B_p(2\eta)$ around $p\in DIP(L)$ with radius $2\eta,$
	i.e. $c_j(s) \in B_p(2\eta),$ then there is 
	$s_1$ with $c_j(s_1)=p$ and 
	$d(c_j(s),c_j(s_1))=|s-s_1|\|c_j'\|\le 2\eta.$
	Hence one can use the perturbation result~\cite[Lem.3]{R2022}
	resp. Lemma~\ref{lem:perturbation}
	and perturb the metric in the ball $D_p(2\eta)$ 
	for all finitely many points in $ DIP_1(L)$ such that
	we obtain a metric $g_1\in \mathcal{G}_{\{p\}\times\{q\}}^*(M)[L]$ in
	any arbitrary small neighborhood of the original metric.
\end{proof}
\section{Proofs of the statements in section~\ref{sec:results}}
Theorem~\ref{thm:generic-loop} is Part (a) of Theorem~\ref{thm:four}.
\begin{proof}[Proof of Corollary~\ref{cor:loops}]
	Since $M$ is simply-connected and compact the following holds:
	There is a number
	$l \in \{2,\ldots,n-1\},$ such that the
	sequence $b_{2lk}=\dim H_{2lk}\left(\Omega_{\{p\}\times\{q\}}(M);\q\right),
	k \ge 1$ satisfies $b_{2lk}\ge 1.$
	It was shown by Gromov~\cite{Gromov}
	and Paternain~\cite[Thm.5.1]{Pa} that there is a constant $C(g)>0$
	such that the critical value $\cri (w_k)$ of a non-trivial homology class
	$w_k\in H_{2lk}\left(\Omega_{\{p\}\times\{q\}}(M);\q\right)$
	satisfies $\cri(w_k)\le  C(g) k$ for all $k\ge 1.$
	By Theorem~\ref{thm:generic-loop} we obtain 
	for a generic
	metric $g$ a sequence
	$c_k$ of simple and non-degenerate geodesic segments joining $p$ and $q$
	of index $2lk$ and length $\le C(g)k.$	
	This follows from the Morse inequalities since the energy-functional
	$E:\Omega_{\{p\}\times\{q\}}(M)\longrightarrow \R$
	is a Morse-function.
\end{proof}
Note that a geodesic loop $c:[0,1]/\{0,1\}\longrightarrow M$
with $p=c(0)$ is prime, if there is no $t\in (0,1)$ with $p=c(t).$
Going through the Proof of Theorem~\ref{thm:four} 
and of Corollary~\ref{cor:loops} we find that
in case of a surface, i.e. for $n=2,$
we obtain the following result. Here we essentially use the
Perturbation Lemma~\ref{lem:perturbation2}:
\begin{proposition}
	\label{pro:surface}
	Let $M$ be a compact surface, i.e. $n=\dim M=2$
	and let $p\in M$ be a point. 
	\begin{itemize}
		\item[(a)]
		For a 
		$C^k$-generic Riemannian metric on $M$ with $k\ge 2$
		all geodesic loops based at $p$ are non-degenerate and prime.
		\item[(b)]
		For a $C^k$-generic Riemannian metric on
		the $2$-sphere
		$S^2$ the number
		$N_g(p;t)$ of geodesic loops based at $p$ 
		satisfies
		$\liminf_{t\to \infty}(N_g(p;t)/t)>0.$
	\end{itemize}
\end{proposition}
Theorem~\ref{thm:generic-ogc} is Part (b) of Theorem~\ref{thm:four}.
\begin{proof}[Proof of Theorem~\ref{thm:geod-chords}]
	As remarked in \cite[p.117]{GGP2018} the absence of 
	orthogonal-tangent geodesic chords is an open condition
	with respect to the $C^1$-topology, hence $\mathcal{G}_1$
	is an open subset. For a rotationally symmetric metric on
	the disc there are no orthogonal-tangent geodesic chords
	since any geodesic segment starting orthogonally from $\partial D^n$
	will end orthogonally at the boundary $\partial D^n.$
	Hence $\mathcal{G}_1$ is non-empty. 
	Let $g_0$ be the Euclidean inner product on the
	$n$-disc $D^n\subset \R^n,$ then let 
	$g \in \mathcal{G}(D^n)\longmapsto D(g)\in R^+$ be the smallest
	positive number such that 
	$D(g)^{-1}g_0(v,v)\le g(v,v)\le D(g)g_0(v,v)$
	for any tangent vector $v \in T_xD^n, x\in D^n.$
	Since the mapping $D: \mathcal{G}(D^n)\longrightarrow \R^+$
	is continuous the set
	$\mathcal{G}_0$ of Riemannian metrics for which 
	all orthogonal geodesic chords of length $\le \pi D(g)	$
	are non-degenerate and simple is an open and dense subset by
	Theorem~\ref{thm:four}(b),
	compare the argument in~\cite[Sec.6]{R2023}.
	Then the intersection $
	\mathcal{G}_2=\mathcal{G}_0\cap \mathcal{G}_1$
	contains all metrics $g$ which do not have an orthogonal-tangent
	geodesic chord and for which all orthogonal geodesic chords
	of length $\le D(g)$ are simple and non-degenerate and do not
	intersect each other. Then we use the result by
	Giamb\`o, Giannoni and Piccione~\cite[Thm.1.1]{GGP2018} which ensures
	the existence of $n$ 
	geometrically distinct
	orthogonal geodesic chords for a metric
	without orthogonal-tangent geodesic chords.	
\end{proof}
If the boundary $\partial D^n$ is strictly convex there are no
orthogonal-tangent geodesic chords, hence Theorem~\ref{thm:geod-chords}
implies Corollary~\ref{cor:bos}.



\end{document}